\documentclass[12pt]{article}

\usepackage[utf8]{inputenc}
\usepackage[english]{babel}
\usepackage{amsmath,amssymb,latexsym,amsfonts}
\usepackage{amsthm}  
\newtheorem{theorem}{Theorem}  
\newtheorem{lemma}{Lemma}
\newtheorem{corollary}{Corollary} 
\newtheorem{exmpl}{Example}
\newtheorem{definition}{Definition}
\newtheorem{remark}{Remark}
\usepackage[cal=boondox]{mathalfa}

\begin{document} 
	\begin{center}
    \LARGE \textbf{On Lacunarity and Sidon-Type Properties for Subsystems of Characters of Compact Abelian Groups}
\end{center}

\begin{center} \Large
Anna Kazakova \footnote{Lomonosov Moscow State University, anna.kazakova@math.msu.ru}
\end{center}

\textbf{Keywords}: Khintchine inequality, Sidon inequality, systems of characters, Rademacher chaoses, Riesz products, unbounded Vilenkin systems, compact abelian groups

\begin{center} \textbf{Abstract}
\end{center}
\begin{center}
This paper studies subsystems that generalize Rademacher chaos to arbitrary systems of character of compact abelian groups. The properties of $q$-lacunarity and $\frac{2d}{d+1}$-Sidon are proved for polynomial $d$-chaos constructed from $d$-dissociated subsystems of characters of compact abelian groups. Subsystems of the polynomial chaos type for unbounded Vilenkin systems are also considered and it is shown that they are not systems of $q$-lacunarity for all $q>2$ and $\alpha$-Sidon for all $\alpha\in [1, 2)$.
\end{center}

\section*{Introduction}
	This paper is devoted to the study of lacunarity for series with respect to orthogonal systems of functions. The investigation of $q$-lacunary systems and related problems constitutes a fundamental and actively developing research direction, as presented in the works of A. N. Kolmogorov, S. Banach, J. Marcinkiewicz, W. Rudin, S. Sidon, V. F. Gaposhkin, S. B. Stechkin, J. Bourgain,  J. Jakubowski, S. Kwapien, A.~Pełczyński, R.~E.~Edwards, E.~Hewitt, K.~A.~Ross, A. Bonami, G. Pisier, R.~Blei, B. S. Kashin, S. V. Astashkin,  and others (see, e.g., the monographs \cite{Kashin-Saakyan-1999}--\cite{Blei} and survey article \cite{Gaposhkin}). 
	
	For some recent results in this direction, see \cite{Ast-2024}--\cite{Kaz26}.
	
	In a broad sense, the lacunarity of a system of functions implies that its properties are close in the probabilistic sense to those of systems of independent functions. A system $(\varphi_k)_{k \in \mathbb{N}}$ is said to be $q$-lacunary if it satisfies $L_2$–$L_q$ Khinchine inequality:
	\begin{equation} \label{28042039}
		\left\|  
		\sum\limits_{k=1}^n c_k \varphi_k  
		\right\|_{L_q}
		\le 
		\kappa  
		\big\|  
		( c_k )_{k=1}^n  
		\big\|_{l_2}, 
		\quad 
		\text{$\kappa = \kappa(q) > 0$ does not depend on $c_k \in \mathbb{C}$}.
	\end{equation} 
	A. Ya. Khinchine \cite{Xinch} proved this inequality for one of the classical function systems --- the Rademacher system, which is a subsystem of the Walsh system and consists of functions that are independent in the probabilistic sense. Subsequently, in a number of works (see, e.g., \cite{Kolm-Xinch}, \cite{Marcinkiewicz-Zygmund}, \cite{Karlin}), questions related to Khinchine inequality for systems of independent functions were studied. In \cite{Kazakova-2}, criteria were investigated under which a given subsystem of the Walsh or Vilenkin--Crestenson system constitutes a system of independent functions. Since these subsystems satisfy the conditions for the applicability of the results by J. Marcinkiewicz and A. Zygmund, Khinchine inequality also holds for them.
	
	Closely related to Khinchine inequality is Sidon inequality. A uniformly bounded system \((\varphi_k)_{k=0}^{\infty}\) is called a Sidon system if, for every polynomial with respect to this system, the following estimate for the $L_\infty$-norm holds:
	\begin{equation} \label{Sidon}
		\big\|  
		( c_k )_{k=1}^n  
		\big\|_{l_1} \le  C \left\|  
		\sum\limits_{k=1}^n c_k \varphi_k  
		\right\|_{L_\infty}, \quad 
		\text{$C > 0$ does not depend on $c_k \in \mathbb{C}$}.
	\end{equation}
	S. Sidon \cite{SidonTrig, Sidon} proved that Hadamard lacunary subsystems of the trigonometric or Walsh systems are Sidon systems. Inequality \eqref{Sidon} is called Sidon inequality. Sidon's proof relies on the use of Riesz products, which have become an important tool in the theory of orthogonal series (see, e.g., \cite[Ch. 9, \S 4]{Kashin-Saakyan-1999} and \cite[Ch. VII]{Blei}). A classical example of a Sidon system is the Rademacher system \cite{Astashkin-2017}, which forms a Hadamard lacunary subsystem of the Walsh system.
	
	The behavior in the space $L_{\infty}$ is entirely different for the system consisting of $d$-fold products of distinct Rademacher functions (the Rademacher $d$-chaos): this system is no longer a Sidon system, but nevertheless it is a $\frac{2d}{d+1}$-Sidon system, according to Blei's result \cite[Chap. VII]{Blei}. The latter means that inequality \eqref{Sidon} holds with the $l_{\frac{2d}{d+1}}$-norm on the left-hand side instead of the $l_1$-norm, and the constant $\frac{2d}{d+1}$ cannot be improved. For the case $d=2$, J. E. Littlewood had already shown that the $2$-chaos of Rademacher satisfies an inequality of the type \eqref{Sidon} with the $l_{4/3}$-norm instead of the $l_1$-norm, which is also commonly referred to as Littlewood's $4/3$ inequality. (see \cite[Sec. 6]{Astashkin-2017}).
	
	Although the $d$-chaos is not a Sidon system, according to a result by A. Bonami \cite{Bonami}, \cite{Bonami2}, it is a $q$-lacunary system for all $q \ge 2$. Note that Sidon inequality always implies Khinchine inequality for systems of characters of compact abelian groups, as established by G. Pisier \cite{Pisier} (for the trigonometric system, this was proved earlier by W. Rudin \cite{Rudin-1960}). 
	
The study of $q$-lacunarity and the Sidon property for arbitrary systems of characters of compact abelian groups was initiated in \cite{EHR} and \cite{EHR2}. In particular, it was shown in \cite{EHR} that every system of characters of a compact abelian group contains a subsystem that is $q$-lacunary for all $q > 2$ yet fails to be a Sidon system, thereby answering a question posed by W. Rudin \cite{Rudin-1960}. As noted above, Rademacher $d$-chaoses (for $d \ge 2$) exhibit a similar property. It should be noted that, according to a conjecture of R. Blei (see \cite{Blei}, Ch.~XIII, \S~8), any subsystem of characters of a compact abelian group that is $q$-lacunary for all $q > 2$ must necessarily possess the $\alpha$-Sidon property for some $\alpha \in [1, 2)$, even if it is not a Sidon system. However, this remains an open problem even for subsets of the Walsh system.
	
We consider natural generalizations of the notion of Rademacher chaos to a system of functions $(\gamma_k)_{k\in\mathbb{N}}$ that forms a subsystem of characters of a compact abelian group:
	
	\begin{equation} \label{Aphid}
		A_{\gamma}^d :=\{\gamma_{k_0} (x) \cdot \ldots \cdot \gamma_{k_{d-1}}(x) 
		\text{, where $k_0< ... < k_{d-1}$}\}
	\end{equation}
	is called the tetrahedral polynomial chaos of degree $d$ with respect to the system $(\gamma_k)_{k\in\mathbb{N}}$, the system 
	\begin{equation} \label{Bphid}
		B_{\gamma}^d :=\{\gamma_{k_0} (x) \cdot \ldots \cdot \gamma_{k_{d-1}}(x) 
		\text{, where $k_0\le ... \le k_{d-1}$}\}
	\end{equation}
	is called the polynomial chaos of degree $d$ with respect to the system $(\gamma_k)_{k\in\mathbb{N}}$. Furthermore, as the broadest generalization, we introduce another subsystem of polynomial chaos type: 
	\begin{equation} \label{Cphid}
		C_{\gamma}^d :=\{\gamma_{k_0}^{j_0} (x) \cdot \ldots \cdot \gamma_{k_{d-1}}^{j_{d-1}}(x) 
		\text{, where $k_0 < ... < k_{d-1}$ and $1 \le j_i < m_{k_i}$}\},
	\end{equation}
	\begin{equation} \label{m_k}
		\text{where} \; m_k = \inf\{m\in \mathbb{N} \cup \{+\infty\} \colon \gamma_k^m =1\}.
	\end{equation}	
	For the following Theorems A and B, see \cite{Blei}, Chapter 7, \S 12.
	
	{\bf Theorem A.} { \it The tetrahedral chaos $A_{\gamma}^{d}$ for $\{\gamma_{i}\}_{i=1}^{\infty} \subset \Gamma$ is a dissociated subsystem of characters of a compact abelian group, is a system of q-lacunarity.}
	
	{\bf Theorem B.} { \it The tetrahedral chaos $A_{\gamma}^{d}$ for $\{\gamma_{i}\}_{i=1}^{\infty} \subset \Gamma$ is a dissociated subsystem of characters of a compact abelian group, is a $\frac{2d}{d+1}$-Sidon system.}

    In Theorems A and B, a dissociative system is understood as a set in the sense of Definition (37.12) of \cite{HewRoss}, which is equivalent to 2-dissociativity in the sense of Definition~\ref{dissociate}.
    
	For the following Theorems C, see \cite{KazPlo25}.
	
	{\bf Theorem C.} { \it Let $p \ge 2$. Then polynomial chaos type $C_{R}^d$  with respect to the system $(R_k)_{k\in\mathbb{N}}$, where $R_k$ denote the generalized Rademacher functions with base $p$ (see definition \ref{dfn4}), is a system of q-lacunarity.}
	
	For the following Theorems D, see \cite{Kaz26}.
	
	{\bf Theorem D.} { \it Let $p \ge 2$.  Then polynomial chaos type $C_{R}^d$  with respect to the system $(R_k)_{k\in\mathbb{N}}$, where $R_i$ denote the generalized Rademacher functions with base $p$ (see definition \ref{dfn4}), is a $\frac{2d}{d+1}$-Sidon system.}
	
	As can be seen, Theorems C and D deal with chaoses involving $d$-fold products of generalized Rademacher functions allowing arbitrary repetitions of factors, whereas Theorems A and B consider more general systems but are restricted to tetrahedral chaoses, i.e., those that do not permit index repetition. This naturally raises the question of extending Theorems A and B to polynomial chaoses, which allow for repeated indices. In the present paper, we make progress on this problem.
	
In \ref{section-3} of this paper (theorems \ref{THEOREM-1} and \ref{THEOREM-2}), we transfer the results of theorems $A$ and $B$ to the case of polynomial chaos $B^d_{\gamma}$ constructed by $d$-dissociated subsystems of the characters of compact Abelian groups (see the definition of \ref{dissociate}). We prove Theorems \ref{THEOREM-1} and \ref{THEOREM-2} by employing the constructions of Riesz product-type measures developed in Section \ref{section-2}, together with the estimate from Theorem \ref{th-0} relating the $L_q$-norms of polynomials of a specific structure with respect to the system $B^d_{\gamma}$ to the $L_q$-norm of appropriately chosen polynomials with respect to the system $C_{R}^d$. Note that the proof technique based on comparing arbitrary subsystems of character systems with ``model'' systems, such as the classical Rademacher system and the Rademacher $d$-chaos, is well known (see \cite[Ch. 7]{Astashkin-2017}, \cite[Ch. VII]{Blei}, \cite{Pel}--\cite{jakubowski1979}). This approach enables one to derive results on the behavior of polynomials over special subsystems of character systems in function spaces, provided that the corresponding results are known in the ``model'' case. For instance, \cite{Pel} proves that if the $L_{\infty}$-norms of polynomials with respect to character systems of compact abelian groups are equivalent, then all $L_{q}$-norms ($q \ge 1$) of these polynomials are equivalent as well. In particular, this implies the aforementioned result by J. Pisier \cite{Pisier} on the $q$-lacunarity of Sidon sets of characters of compact abelian groups, obtained when the classical Rademacher system is taken as one of the systems in A. Pełczyński's theorem. In Section \ref{section-3}, we adopt $p$-ary analogs of Rademacher chaos as our ``model'' systems.

In \ref{section-5} (Theorems~\ref{theorem-not-a-sid} and~\ref{theorem-not-q-lak}), we show that if, in Theorems~C and~D, the Vilenkin systems with constant $p$ are replaced by unbounded Vilenkin systems  (i.e. $\sup_{k} p_k = \infty$), then the conclusions of Theorems~C and~D no longer hold. Moreover, the resulting polynomial chaos-type subsystem $C^d_R$ fails to be an $\alpha$-Sidon system for any $1 \le \alpha < 2$.

In \ref{section-6}, we establish a necessary condition for $\alpha$-sidonicity (Theorem~\ref{theorem-a}). Note that this theorem yields an alternative proof (compared to \cite[Ch.~VII, \S~11]{Blei}) of the failure of the $\alpha$-Sidon property for $\alpha < \frac{2d}{d+1}$ for $C^d_R$-type subsystems of Vilenkin systems with constant $p$, and it also plays a crucial role in the proof of Theorem~\ref{theorem-not-a-sid}.

	In \ref{sec-def}, we introduce the basic definitions and notation. In \ref{section-1}, we define $d$-dissociated systems and provide examples. In \ref{section-4}, as a consequence of our results and known theorems on the discretization of integral norms, we establish conditions on the number of points required to discretize the $L_q$-norm in our case.

\section{Definitions and Notation} \label{sec-def}
We use the symbol $:=$ to denote equality by definition. 

$\mathbb{C}$ denotes the set of complex numbers, $\mathbb{N}$ the set of natural numbers, and $\mathbb{N}_0 := \mathbb{N} \cup \{ 0 \}$. 

Let $G$ be a compact abelian group, and let $\mu$ be the normalized Haar measure on $G$ (so that $\mu(G) = 1$).

$L_q(G) := \left\{ 
f \colon G \to \mathbb{C} \colon 
\|f\|_q := \left( \int_G |f(x)|^q \, d\mu(x) \right)^{1/q} < \infty 
\right\}, \quad 1 \le q < \infty.$

$L_\infty(G) := \left\{ 
f \colon G \to \mathbb{C} \colon 
\|f\|_\infty < \infty 
\right\},$

where $\|f\|_\infty := \operatorname{ess\,sup}_{x \in G} |f(x)| 
= \inf \left\{ M \ge 0 : |f(x)| \le M \ \text{for $\mu$-a.e. } x \in G \right\}$.

$\left\| ( c_k )_{k=1}^n \right\|_{\ell_2} 
:= \left( \sum_{k=1}^n | c_k |^2 \right)^{1/2}, \quad c_k \in \mathbb{C}$. 

The cardinality of a set $A$ is denoted by $|A|$.

\begin{definition}
	A \textit{character} $\chi$ of the group $G$ is a continuous mapping from $G$ to the unit circle $\{z \in \mathbb{C} : |z|=1\}$ such that $\chi(x \dot{+} y) = \chi(x)\chi(y)$, where $\dot{+}$ denotes the group operation in $G$.
\end{definition}

\begin{definition} \label{dfn4} 
	Let $\mathcal{P} = (p_k)_{k \in \mathbb{N}_0}$ be a sequence of natural numbers such that $p_k \ge 2$. Consider the group $\mathbb{G}_{\mathcal{P}} := \prod_{k=0}^{\infty} \mathbb{Z}_{p_k}$. Elements of $\mathbb{G}_{\mathcal{P}}$ are conveniently represented as infinite sequences 
	\begin{equation}
		\label{Eq:PG-El}  g = (g_0, g_1, \dots), \quad \text{where } g_k \in \{0, \ldots, p_k-1\}.
	\end{equation}  
	
	The character system of $\mathbb{G}_{\mathcal{P}}$ is known as the Vilenkin system, defined as follows.
	
	Let $\omega_{p_k} := \exp\left(\frac{2\pi i}{p_k}\right)$. The \textit{generalized Rademacher functions} on $\mathbb{G}_{\mathcal{P}}$ are given by
	\begin{equation*}
		R_k(g) = \omega_{p_k}^{g_k}, \quad k \in \mathbb{N}_0,
	\end{equation*}
	where $g \in \mathbb{G}_{\mathcal{P}}$ is of the form \eqref{Eq:PG-El}.
	
	Set $P_0 = 1$ and $P_k = \prod_{j=0}^{k-1} p_j$ for $k \ge 1$. The Vilenkin functions in the Paley ordering are defined as products of generalized Rademacher functions:
	\begin{equation}
		V_0(g) \equiv 1, \qquad V_n(g) = \prod_{k=0}^{\infty} [R_k(g)]^{n_k} 
		= \prod_{k=0}^{\infty} \omega_{p_k}^{g_k n_k}, \qquad n \in \mathbb{N}, 
	\end{equation}
	where $n_k$ are the digits of $n$ in the mixed radix number system with bases $p_k$. Specifically, $n_k \in \{0, \ldots, p_k-1\}$ are the coefficients in the expansion 
	$n = \sum_{k=0}^\infty n_k P_k$ (with only finitely many $n_k$ nonzero).
\end{definition}

Note that, up to a countable set, the group $\mathbb{G}_{\mathcal{P}}$ is mapped onto the interval $[0,1]$ via
$$\phi: \mathbb{G}_{\mathcal{P}} \to [0,1], \qquad \phi(g) = \sum_{k=0}^{\infty} \frac{g_k}{P_{k+1}}.$$
If $p_k = 2$ for all $k$, the system is called the Walsh system. The dyadic Rademacher system is denoted by $(r_i)_{i\in \mathbb{N}}$. 

The Vilenkin system is said to be \textit{bounded} if $\sup_{k \in \mathbb{N}_0} p_k \le M$ for some $M$, and \textit{unbounded} if $\sup_{k \in \mathbb{N}_0} p_k = \infty$.

We denote by $B_N$ the set of the first $P_N$ Vilenkin functions:
\begin{equation} \label{B_N}
	B_N := \{V_n : 0 \le n < P_N\}.
\end{equation} 

In the literature, the functions defined above are also referred to as Vilenkin--Crestenson functions \cite{Malozemov-Masharskii-2001}.

	\section{ $d$-dissociated subsystems of the character. Definition and Examples.} \label{section-1}
	
	We introduce the notion of $d$-dissociativity. For the case $d=2$, see \cite[(37.12)]{HewRoss}.
	
	\begin{definition} \label{dissociate}
		Let $\Gamma$ be the character group of a compact abelian group $G$. A finite subset $\{\gamma_1, \ldots, \gamma_m\} \subset \Gamma$ (with all $\gamma_i$ distinct) is called $d$-dissociated if it does not contain the trivial character and if the relation
		\[
		\gamma_1^{k_1} \gamma_{2}^{k_2} \dots \gamma_{m}^{k_m} \equiv 1,
		\]
		where each $k_j$ belongs to $\{-d, -(d-1), \ldots, 0, \ldots, d-1, d\}$, implies that
		\[
		\gamma_1^{k_1} \equiv \gamma_{2}^{k_2} \equiv \ldots \equiv \gamma_{m}^{k_m} \equiv 1.
		\]
		An infinite subset of $\Gamma$ is said to be $d$-dissociated if every one of its finite subsets is $d$-dissociative.
	\end{definition}
	
	Let us give some examples.
	\begin{exmpl}
		Let $G = \mathbb{T}$ be the one-dimensional torus. In this case, the system of characters is the trigonometric system $\{\exp(nix)\}_{n\in \mathbb{Z}}$. Consider a Hadamard lacunary sequence $\{n_k\}_{k=1}^{\infty}$, i.e., $\frac{n_{k+1}}{n_k} \ge q$. Let $q \ge d+1$; then the subsystem $\{\exp(\pm n_kix)\}_{k=1}^{\infty}$ is $d$-dissociated. Let us prove this fact. Assume that the system is not $d$-dissociated. Then there exist functions $e^{in_{\alpha_1} x}, \ldots, e^{in_{\alpha_m} x}$, with $n_{\alpha_1} < \ldots < n_{\alpha_m}$, and integers $a_1, \ldots, a_m \in \{-d, \ldots, 0, \ldots, d\} \setminus \{0\}$ such that
		\[
		\exp\bigl([a_1 n_{\alpha_1} + \ldots + a_m n_{\alpha_m}]\, i x\bigr) \equiv 1,
		\]
		but $e^{a_j n_{\alpha_j} i x} \not\equiv 1$ for all $j \in \{1, \ldots, m\}$.
		For $q \geq d+1$ we have
		\[
		\biggl|\sum_{i=1}^{m-1} a_i n_{\alpha_i}\biggr| 
		\leq |n_{\alpha_m}| \cdot d \sum_{i=1}^{m-1} \frac{1}{q^i} 
		< \frac{|n_{\alpha_m}| \cdot d}{q-1} \le |n_{\alpha_m}|.
		\]
		The last relation contradicts the fact that $a_1 n_{\alpha_1} + \ldots + a_m n_{\alpha_m} \equiv 0.$ Thus, the system is $d$-dissociated.
	\end{exmpl}
	
	\begin{exmpl} \label{prikol} Let the sequence $(p_k)_{k\in\mathbb{N}}$ be such that $p_k= p\ge 2$ for all $k$. Let us consider a finite set of Vilenkin functions.
		\begin{equation} 
			\label{finitesubsetV^d_p}
			V_{n^1}, \; V_{n^2}, \; \ldots, \; V_{n^m}, 
			\qquad 
			n^1 < n^2 < \; \ldots, \; < n^m. 
		\end{equation}
		For each $n^i$, we write its $p$-adic expansion $ 
		n^i = n_{i1} p^{k_{i1}} + n_{i2} p^{k_{i2}} + \ldots + n_{id(i)} p^{k_{id(i)}}, \;
		n_{ij} \in \{1, \ldots, p\}$.
		We get a set of sets
		\[
		\{ k_{11}, k_{12}, \ldots, k_{1d(1)}\}, 
		\quad 
		\{ k_{21}, k_{22}, \ldots, k_{2d(2)}\}, 
		\ldots, 
		\quad 
		\{ k_{m1}, k_{m2}, \ldots, k_{md(m)} \} 
		\]   of non-negative integers. Let these sets be ordered so that each successive set contains a number that is not in all the preceding ones. Then the corresponding system \eqref{finitesubsetV^d_p} is $d$-dissociated for every $d$.
	\end{exmpl}
	
	We note that, according to \cite{Kazakova-2}, if $p$ is prime, then the condition from Example \ref{prikol} is sufficient for the independence of the functions $V_{n^1}$, $V_{n^2}$, \ldots, $V_{n^m}$.
	
	A particular case of Example \ref{prikol} is the following. Let $(j_k)_{k=1}^{\infty}$ be a fixed sequence of numbers from the set $\{1, \ldots, p-1\}$. Then the system of functions $(R_k^{j_k})_{k=1}^{\infty}$ is a $d$-dissociated system for every $d$ and a system of independent functions for any base $p$.
	\section{Constructions of measures such as Riesz products} \label{section-2}
Let $\{\gamma_{i}(x)\}_{i=1}^{\infty} \subset \Gamma$ be an arbitrary $d$-dissociated subsystem of characters of a compact abelian group $G$, and let $m_i$ be given by \eqref{m_k}. Consider a sequence of complex numbers $(a_{ik})$, where $i \in \mathbb{N}$ and $1 \le k \le d$, such that $|a_{ik}| \le 1.$
	
Then we can define the following analogue of the Riesz product:
	\begin{equation} \label{yyy}
		\rho= weak^*\lim_{n \rightarrow\infty} \rho_n, \qquad  \rho_n(x)= \prod_{i=0}^{n}\left(1 + \frac{1}{2d}\left[\sideset{}{'}{\sum}_{k} a_{ik} \gamma_i^k(x) +  \sideset{}{'}{\sum}_{k} \overline{ a_{ik}\gamma_i^{k}(x)}\right]\right),
	\end{equation}
where the summation $\sum_{k}^{\prime}$ runs over $k \in \{1, \dots, d_i\}$ with $d_i := \min(d, m_i-1)$, excluding indices $k$ for which there exists $k^{\prime} < k$ such that
	\[
	\overline{\gamma_i^{k}} = \gamma_i^{k^{\prime}}.
	\]
We compute the Fourier coefficients with respect to the system $\Gamma$:
	\begin{equation} \label{rho-main}
		\begin{split}
			&\widehat{\rho}(\gamma_{k_1}^{\alpha_{1}} \cdot \ldots \cdot \gamma_{k_j}^{\alpha_{j}}) = \lim_{n\rightarrow \infty} 	\widehat{\rho}_n(\gamma_{k_1}^{\alpha_{1}} \cdot \ldots \cdot \gamma_{k_j}^{\alpha_{j}}) = \frac{a_{k_1\alpha_1}'\ldots a_{k_j\alpha_j}'}{(2d)^j}, \\
			& \text{where} \ k_1 < \ldots < k_j, \ \alpha_{j} \in \{1, \ldots, d_{k_j}\}, \ a_{k_i\alpha_i}' := 
			\begin{cases} 
				\overline{a_{k_i\alpha}}, & \text{if } \exists\, \alpha < \alpha_i : \gamma_{k_i}^{\alpha} = \overline{\gamma_{k_i}^{\alpha_i}}, \\ 
				a_{k_i\alpha_i}, & \text{otherwise}.
			\end{cases} 
		\end{split}
	\end{equation}
	\begin{lemma} The formula \eqref{yyy} correctly defines a measure on the group $G$, with the variation of this measure being equal to 1.
	\end{lemma}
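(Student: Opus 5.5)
Three facts about $\rho_n$ give the lemma: each $\rho_n$ is a nonnegative trigonometric polynomial with integral $1$; its Fourier coefficients stabilize as $n\to\infty$; and the unit ball of $M(G)=C(G)^*$ is weak$^*$ compact.

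\emph{Step 1: positivity.} Each factor in \eqref{yyy} has the form $1+\frac{1}{2d}\bigl(S+\overline{S}\bigr)=1+\frac{1}{d}\operatorname{Re} S$, where $S=\sideset{}{'}{\sum}_k a_{ik}\gamma_i^k(x)$. The sum has at most $d$ terms, each of modulus at most $1$, so $|S|\le d$ and every factor is real and nonnegative. Hence $\rho_n\ge 0$.

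\emph{Step 2: normalization.} I would show $\int_G \rho_n\,d\mu=\widehat{\rho}_n(1)=1$. Expanding the product, $\rho_n$ is a linear combination of characters $\gamma_{i_1}^{\varepsilon_1 k_1}\cdots\gamma_{i_j}^{\varepsilon_j k_j}$ with $i_1<\dots<i_j$, $\varepsilon_l=\pm1$ and $1\le k_l\le d_{i_l}$. The central point is that such a product is trivial only when it is the empty product ($j=0$). By $d$-dissociativity, triviality of the product forces each $\gamma_{i_l}^{\varepsilon_l k_l}\equiv 1$. This is impossible, because $1\le k_l\le d_{i_l}\le m_{i_l}-1<m_{i_l}$. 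Consequently only the constant term $1$ contributes to the mean, and $\|\rho_n\|_{M(G)}=\int\rho_n\,d\mu=1$.

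\emph{Step 3: Fourier coefficients.} The same argument shows that distinct choices of data give distinct characters. These data are the sets $\{i_l\}$, the signs, and the exponents, where the primed sum removes exponents $k$ with $\gamma_i^{-k}=\gamma_i^{k'}$ for some $k'<k$. Suppose two such products coincide. Their quotient is then trivial, and it is a product over the indices of expressions $\gamma_i^{e}$ with $|e|\le 2d$. Here lies the main obstacle: a factor like $\gamma_i^{k}\gamma_i^{-k'}$ has exponent up to $d$ in absolute value, because $k$ and $k'$ have the same sign or one of them is absent. Exponents larger than $d$ arise only when the signs differ. In that case I would rewrite such a factor as $\gamma_i^{k}\cdot\overline{\gamma_i^{-k'}}$ and check that it still falls under Definition~\ref{dissociate}. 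The check runs as follows: $\gamma_i^{k+k'}$ with $k,k'\le d$ can be reduced modulo $m_i$, or written with the conjugate exponent, to one of absolute value at most $d$ only if $m_i\le 2d$. For such $i$, the prime convention in the sum ensures that each character $\gamma_i^{\pm k}$ appears only once. After this bookkeeping, each index contributes a factor that must equal $1$, so the data coincide index by index.

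With distinctness established, the coefficient of a character $\gamma_{k_1}^{\alpha_1}\cdots\gamma_{k_j}^{\alpha_j}$ in $\rho_n$, for $n\ge k_j$, is exactly the product of the corresponding coefficients $a'/(2d)$, as in \eqref{rho-main}. Characters outside this family have coefficient $0$. So $\widehat{\rho}_n(\chi)$ is eventually constant for every $\chi\in\Gamma$.

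\emph{Step 4: limit and variation.} The sequence $(\rho_n\,d\mu)$ lies in the unit ball of $M(G)=C(G)^*$, which is weak$^*$ compact. Any two weak$^*$ limit points have the same Fourier coefficients by Step 3, so they coincide by uniqueness of Fourier--Stieltjes transforms. Hence the whole sequence converges weak$^*$ to a measure $\rho$ whose coefficients are given by \eqref{rho-main}. Since each $\rho_n$ is positive, $\rho$ is positive, and therefore $\|\rho\|=\rho(G)=\lim\int\rho_n\,d\mu=1$.
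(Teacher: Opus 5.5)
Your Steps 1 and 2 match the paper's proof: each factor is nonnegative, and $\int_G\rho_n\,d\mu=1$ because every nonempty product in the expansion has exponents in $\{-d,\dots,d\}$ and is nontrivial by $d$-dissociativity. The compactness and positivity part of Step 4 also matches. The paper takes a weak$^*$-convergent subsequence by Banach--Alaoglu and calls its limit $\rho$. You rightly add that this limit is positive with $\rho(G)=\lim\int\rho_n\,d\mu=1$, so $\|\rho\|=1$.

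\textbf{The gap is in Step 3}, which you use to get convergence of the whole sequence. Its key claim is that distinct data in the expansion of $\rho_n$ give distinct characters. This is false under $d$-dissociativity alone. The quotient of two expansion terms contains factors $\gamma_i^{\pm(k+k')}$ with $d<k+k'\le 2d$ wherever the signs differ. When $m_i$ is large (e.g.\ $m_i=\infty$), such an exponent cannot be rewritten inside $\{-d,\dots,d\}$, and Definition~\ref{dissociate} then gives no information. Your bookkeeping only handles small $m_i$ and is silent on this case.

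Here is a concrete counterexample with $d=2$. The characters $\gamma_1=e^{7ix}$, $\gamma_2=e^{10ix}$, $\gamma_3=e^{11ix}$ on $\mathbb{T}$ form a $2$-dissociated set, since $7k_1+10k_2+11k_3=0$ has no nontrivial solution with $|k_j|\le 2$. Yet $\gamma_1^{2}=\gamma_1^{-1}\gamma_2\gamma_3$, and both products occur in the expansion. So the coefficient of $\gamma_1^2$ in the product of the three factors is $\frac{a_{1,2}}{4}+\frac{\overline{a_{1,1}}\,a_{2,1}a_{3,1}}{64}$, not the single product of \eqref{rho-main}. Your argument therefore establishes neither the stabilization of $\widehat{\rho}_n(\chi)$ nor the uniqueness of weak$^*$ limit points. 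The claim that the whole sequence converges to a measure with coefficients \eqref{rho-main} is unsupported.

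\textbf{What survives is the paper's proof.} Drop Step 3 and the uniqueness claim in Step 4. Steps 1 and 2, together with Banach--Alaoglu and positivity of weak$^*$ limits of positive measures, give a weak$^*$ cluster point $\rho$ of $(\rho_n)$ with $\|\rho\|=1$. That is all the paper proves here; it too only passes to a subsequence. If you want the full sequence to converge and the coefficients to be given by \eqref{rho-main}, you need a stronger hypothesis such as $2d$-dissociativity. Then every quotient of two expansion terms has exponents in $\{-2d,\dots,2d\}$, and your distinctness argument, and hence stabilization, goes through directly.
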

	\begin{proof}
		We obtain
		\begin{equation} \label{corect}
			1 + \frac{1}{2d}\left[\sideset{}{'}{\sum}_{k} a_{ik} \gamma_i^k(x) +  \sideset{}{'}{\sum}_{k} \overline{ a_{ik}\gamma_i^{k}(x)}\right] = 1 + \frac{1}{d} \sideset{}{'}{\sum}_{k}\mathrm{Re}(a_{ik}\gamma_i^k) \ge 0,   
		\end{equation}
		since $|a_{ik}| \le 1$ and $|\gamma_{i}^k(x)|=1$,  because $\{\gamma_{i}(x)\}_{i=1}^{\infty}$ is the characters of a compact Abelian group.
		
		This yields 
		\begin{equation*} 
			\|\rho\|_{L_1} = \int^1_0 |\rho_n(x)| d \mu \overset{(\star)}{=} \int_0^1 \rho_n(x)d\mu  \overset{(\star \star)}{=} 1, 
		\end{equation*}
		
where $(\star)$ follows from \eqref{corect}, and $(\star\star)$ is justified by the orthogonality of characters and the $d$-dissociativity of the system $\{\gamma_{i}(x)\}_{i=1}^{\infty}$. 

Since $\|\rho_n\|_{L_1} = 1$, we may regard $\rho_n$ as linear functionals on $C[0,1]$. By the Banach--Alaoglu theorem, we can extract a weak$^*$-convergent subsequence, whose limit is the measure $\rho$ given in \eqref{yyy}.    
	\end{proof}

	\begin{lemma} \label{lemma-ppp} Let some $s\le d$ be fixed. Then there exists a measure $\nu_s$ such that for its Fourier coefficients according to the system of characters $\Gamma$, the following is true:
		\[
		\begin{cases}
			\widehat{\nu}_s(\gamma_{k_1}^{\alpha_{1}} \cdot \ldots \cdot \gamma_{k_j}^{\alpha_{j}})  =1, \quad j=s, \\
			\widehat{\nu}_s(\gamma_{k_1}^{\alpha_{1}} \cdot \ldots \cdot \gamma_{k_j}^{\alpha_{j}}) =0, \quad j \le d, j \neq s,
		\end{cases}
		\]
		where $k_1 < \ldots < k_j$ and $\alpha_{j} \in \{1, \ldots, d_{k_j}-1, d_{k_j}\}$.
	\end{lemma}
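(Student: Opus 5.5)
Following the classical construction of Blei (Ch.~VII), I would take the Riesz products $\rho$ from \eqref{yyy} with a one–parameter family of coefficients and then extract the homogeneous part of degree $s$ by a finite linear combination. Concretely, for $t\in\mathbb{C}$ with $|t|\le 1$, let $\rho^{(t)}$ be the measure \eqref{yyy} with $a_{ik}:=t$ for all admissible $i,k$. By the previous lemma each $\rho^{(t)}$ is a measure of total variation $1$. By \eqref{rho-main}, for a character $\gamma_{k_1}^{\alpha_1}\cdots\gamma_{k_j}^{\alpha_j}$ with $k_1<\dots<k_j$, we get $\widehat{\rho^{(t)}}=\prod_i a'_{k_i\alpha_i}/(2d)^j$. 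This equals $t^{a}\,\overline{t}^{\,j-a}/(2d)^j$, where $a$ counts the factors whose coefficient was not conjugated. To avoid conjugation, I would restrict to real $t\in[-1,1]$. Then the coefficient becomes exactly $t^j/(2d)^j$, a polynomial in $t$ of degree $j\le d$.

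Next I choose $d+1$ distinct points $t_0,\dots,t_d\in[-1,1]$ and solve the Vandermonde system for $c_0,\dots,c_d$:
\[
\sum_{l=0}^{d} c_l\, t_l^{\,j}=(2d)^s\,\delta_{js},\qquad j=0,1,\dots,d .
\]
This is solvable because the Vandermonde matrix is invertible. Then I set $\nu_s:=\sum_l c_l\rho^{(t_l)}$. It is a finite combination of bounded measures, so $\nu_s$ is a measure with $\|\nu_s\|\le\sum|c_l|$, a constant depending only on $d$ and $s$. Its Fourier coefficient at $\gamma_{k_1}^{\alpha_1}\cdots\gamma_{k_j}^{\alpha_j}$ is $(2d)^{-j}\sum_l c_l t_l^j$. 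This is $1$ for $j=s$ and $0$ for $j\ne s$, $j\le d$, which is the claim. The case $j=0$ (the trivial character) is also handled, since it receives $0$ unless $s=0$.

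The main obstacle is justifying \eqref{rho-main} carefully, that is, that the Fourier coefficient of $\rho_n$ at a given product character equals exactly that product of coefficients with no extra contributions. Expanding $\rho_n$ gives a sum of products $\prod_i \gamma_i^{\pm k_i}$ with $|k_i|\le d$. The $d$-dissociativity (Definition~\ref{dissociate}) is what guarantees that two such monomials coincide only when the individual factors coincide, i.e.\ $\gamma_i^{\beta_i}=\gamma_i^{\beta'_i}$ for each $i$. The exponent differences here lie in $[-2d,2d]$, not $[-d,d]$. So I would need to check that the primed summation rule, which excludes $k$ with $\overline{\gamma_i^k}=\gamma_i^{k'}$ for some $k'<k$ and takes $k\le d_i$, together with the dissociativity hypothesis, still yields uniqueness of representation. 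If that fails, I would fall back on checking it for the exponents actually occurring. I would take \eqref{rho-main} as given from the text, noting that the weak$^*$ limit preserves Fourier coefficients because they stabilize for $n\ge k_j$. The remaining points are routine: the choice of real $t$ removes the conjugation issue, and the Vandermonde inversion needs only $d+1$ distinct nodes.
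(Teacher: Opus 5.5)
Your argument is correct, granting \eqref{rho-main} exactly as the paper does. Its mechanism differs from the paper's, although both end in inverting a Vandermonde-type matrix.

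The paper fixes a single Riesz product $\rho$ with $a_{ik}\equiv 1$ and takes $\nu_s=c_0\delta_0+c_1\rho+\dots+c_d\,\rho^{*d}$ (convolution powers). Since convolution multiplies Fourier coefficients, $\widehat{\nu}_s$ at a degree-$j$ character equals $P\bigl((2d)^{-j}\bigr)$, where $P(z)=\sum_l c_l z^l$. The polynomial $P$ is chosen to vanish at $0$ and at $(2d)^{-j}$ for $1\le j\le d$, $j\ne s$, and to equal $1$ at $(2d)^{-s}$.

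You instead deform the Riesz product itself through a real parameter $t$ and separate degrees by homogeneity in $t$, which is the transposed interpolation problem. Both routes give $\|\nu_s\|\le\sum_l|c_l|$ with a constant depending only on $d$.

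What each route buys:
\begin{itemize}
\item The paper's version uses one measure with a fixed real coefficient, so there is no parameter family and no conjugation bookkeeping.
\item Yours needs only linearity of the Fourier transform.
\item Yours also controls the trivial character ($j=0$). The paper's $\nu_s$ has $\widehat{\nu}_s(1)=P(1)$, which is not controlled, though this is harmless in Theorem~\ref{THEOREM-2}.
\item Yours kills every degree $j\neq s$ using only that the coefficient of $\rho^{(t)}$ at a degree-$j$ character is some multiple of $t^j$. The convolution-power argument needs all degree-$j$ characters to carry the exact common value $(2d)^{-j}$.
\end{itemize}

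Your worry about \eqref{rho-main} is well founded, and it affects the paper's proof exactly as much as yours. Uniqueness of the expansion of $\rho_n$ involves exponent differences up to $2d$, which $d$-dissociativity in the sense of Definition~\ref{dissociate} does not control. For example, $\{e^{ix},e^{2dix}\}$ on $\mathbb{T}$ is $d$-dissociated. Yet with $a_{ik}\equiv 1$ the product $e^{-idx}\cdot e^{2dix}$ adds $(2d)^{-2}$ to the coefficient of $\rho$ at $e^{idx}$. So falling back on ``the exponents actually occurring'' will not rescue the formula; one needs a hypothesis of $2d$-dissociated type.

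A second defect of \eqref{rho-main} shows where your route is genuinely more robust. Suppose some $\gamma_i^k$ with $k\le d_i$ is real-valued, e.g.\ a Rademacher function. Then the $i$-th factor in \eqref{yyy} has coefficient $\mathrm{Re}(a_{ik})/d$ at $\gamma_i^k$, not $a_{ik}/(2d)$. Your homogeneity argument still kills all $j\ne s$. Only the value at $j=s$ picks up powers of $2$, and this can be fixed by halving those terms in \eqref{yyy}. The paper's interpolation nodes $(2d)^{-j}$, by contrast, no longer match the true coefficients.
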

	\begin{proof}
Consider the measure $\rho$ defined by \eqref{yyy} with $a_{ik} \equiv 1$. Let
		\[
		\nu_s = c_0\delta_0 + c_1\rho + \ldots + c_{d}\underbrace{\rho * \ldots * \rho}_{d}, \qquad \text{where $\delta_0$ --- the Dirac measure at 0}.
		\]
		Then, using the formula \eqref{rho-main}, for the Fourier coefficients of this measure, we obtain:
		\[
		\widehat{\nu}_s(\gamma_{k_1}^{\alpha_{1}} \cdot \ldots \cdot \gamma_{k_j}^{\alpha_{j}}) = c_0 + \sum_{l=1}^{d}c_l (\widehat{\rho}(\gamma_{k_1}^{\alpha_{1}} \cdot \ldots \cdot \gamma_{k_j}^{\alpha_{j}}))^l = \sum_{l=0}^{d}c_l (2d)^{-jl}.
		\]
		We define the coefficients $\{c_i\}_{i=0}^{d}$ as the coefficients of the polynomial of degree $d$ that vanishes at $0$ and at $(2d)^{-j}$ for all $j \le d$ with $j \ne s$, and takes the value $1$ at $(2d)^{-s}$.
		
		For a variation of the measure $\nu_s$, we get
		\begin{equation} \label{2804}
			||\nu_s|| \le \sum_{i=0}^{d} |c_i| =:C_s.		    
		\end{equation}
	\end{proof}
	\begin{lemma} \label{muy}
Let $R_i$ denote the generalized Rademacher functions with base $2d+1$. Then, for each fixed $y \in [0,1]$, there exists a measure $\rho_y$ such that for any fixed indices $k_1 < \cdots < k_s$ and $\alpha_j \in \{1, \ldots, d_{k_j}\}$, the corresponding coefficients of $\rho_y$ are given by the following formula:
		\begin{equation}  \label{muycoef}
			\widehat{\rho_y}( \gamma_{k_1}^{\alpha_1} \cdot \ldots \cdot \gamma_{k_s}^{\alpha_s}) = \frac{ R_{k_1}^{\alpha_1^{\prime}}(y) \cdot \ldots \cdot R_{k_s}^{\alpha_s^{\prime}}(y)}{(2d)^{s}},   \end{equation}
		\begin{equation} \label{alpha'}
			\text{where} \quad \alpha'_i = \alpha'_i(\alpha_i, k_i) := 
			\begin{cases} 
				2d+1-\alpha, & \text{if } \exists\, \alpha < \alpha_i : \gamma_{k_i}^{\alpha} = \overline{\gamma_{k_i}^{\alpha_i}}, \\ 
				\alpha_i, & \text{otherwise}.
			\end{cases} 
		\end{equation}
	\end{lemma}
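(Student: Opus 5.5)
The plan is to realize $\rho_y$ as a special case of the Riesz product \eqref{yyy}, by choosing the coefficients $a_{ik}$ to depend on $y$. Concretely, for fixed $y$ set
\[
a_{ik} := R_i^{k}(y) = \omega_{2d+1}^{k y_i}, \qquad i \in \mathbb{N},\ 1 \le k \le d,
\]
where $y_i$ is the $i$-th digit of $y$ in base $2d+1$ (for $y$ outside the countable exceptional set, otherwise fix any preimage in $\mathbb{G}_{\mathcal{P}}$). Since $|R_i(y)|=1$, we have $|a_{ik}| \le 1$. The lemma on \eqref{yyy} then shows that $\rho_y := \rho$ is a well-defined measure of total variation $1$, and \eqref{rho-main} gives its Fourier coefficients:
\[
\widehat{\rho_y}(\gamma_{k_1}^{\alpha_1}\cdots\gamma_{k_s}^{\alpha_s}) = (2d)^{-s}\, a'_{k_1\alpha_1}\cdots a'_{k_s\alpha_s}.
\]

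The second step is to identify each $a'_{k_i\alpha_i}$ with $R_{k_i}^{\alpha_i'}(y)$. There are two cases.
\begin{itemize}
\item If no $\alpha<\alpha_i$ satisfies $\gamma_{k_i}^{\alpha}=\overline{\gamma_{k_i}^{\alpha_i}}$, then $a'_{k_i\alpha_i}=a_{k_i\alpha_i}=R_{k_i}^{\alpha_i}(y)$, which matches $\alpha_i'=\alpha_i$.
\item Otherwise, $a'_{k_i\alpha_i}=\overline{a_{k_i\alpha}}=\overline{R_{k_i}^{\alpha}(y)}$. Because $R_{k_i}$ takes values in the $(2d+1)$-th roots of unity, $\overline{R_{k_i}^{\alpha}} = R_{k_i}^{2d+1-\alpha}$. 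This is exactly the exponent $\alpha_i'$ in \eqref{alpha'}.
\end{itemize}
The base $2d+1$ is chosen precisely so that this conjugation identity holds, and so that the exponents $1,\dots,d$ and their conjugates $2d+1-\alpha \in \{d+1,\dots,2d\}$ are all distinct and nontrivial.

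The main obstacle is not the algebra but making \eqref{rho-main} rigorous for these coefficients. One has to check that, for a $d$-dissociated system, expanding $\rho_n$ produces the character $\gamma_{k_1}^{\alpha_1}\cdots\gamma_{k_s}^{\alpha_s}$ only from the single choice of terms in the factors $k_1,\dots,k_s$, with $1$ taken in all other factors. This needs the exclusion rule in $\sum'$: it removes duplicate representations coming from $\overline{\gamma_i^{k}}=\gamma_i^{k'}$, which occur when $m_i$ is small.

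One must also confirm that the weak$^*$ limit does not depend on the subsequence. This holds because the Fourier coefficients of $\rho_n$ stabilize for $n\ge k_s$, and a measure is determined by its Fourier coefficients. Since these points were already handled in deriving \eqref{rho-main} for arbitrary $|a_{ik}|\le1$, it suffices to note that they apply verbatim here, and the lemma follows.
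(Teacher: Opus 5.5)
Your route is exactly the paper's: put $a_{ik}=R_i^k(y)$ in \eqref{yyy}, read the coefficients off \eqref{rho-main}, and use $\overline{R_k^{\alpha}}=R_k^{2d+1-\alpha}$ in the excluded case. That algebra is fine. The trouble is the step you single out as the main obstacle and then call ``already handled''. Neither your argument nor \eqref{rho-main} handles it, and \eqref{muycoef} is false in the stated generality.

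\emph{Self-conjugate powers.} The rule in $\sum'$ discards $k$ only when $\overline{\gamma_i^k}=\gamma_i^{k'}$ with $k'<k$ \emph{strictly}. So a self-conjugate power ($\gamma_i^{2k}\equiv 1$, i.e.\ $k=m_i/2\le d$) survives, and $\gamma_i^k$ occurs twice in the $i$-th factor, with total coefficient $(a_{ik}+\overline{a_{ik}})/(2d)=\mathrm{Re}(a_{ik})/d$. Take the dyadic Rademacher system $\gamma_i=r_i$, which the paper itself lists as $d$-dissociated for every $d$. There $m_i=2$, $d_i=1$, and your $\rho_y$ has $\widehat{\rho_y}(r_i)=\mathrm{Re}(R_i(y))/d$, which never equals $R_i(y)/(2d)$. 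In fact no positive measure can satisfy \eqref{muycoef} here. Its coefficient at the real character $r_i$ must be real, while $R_i(y)=\omega_{2d+1}^{y_i}$ is non-real whenever $y_i\neq 0$.

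\emph{Insufficient dissociativity.} Uniqueness of the representation of $\gamma_{k_1}^{\alpha_1}\cdots\gamma_{k_s}^{\alpha_s}$ in the expansion of $\rho_n$ means ruling out relations $\prod_i\gamma_i^{e_i-\alpha_i}\equiv 1$ with $e_i\in\{-d,\dots,d\}$ and $\alpha_i\in\{0,\dots,d\}$. The exponents then range over $\{-2d,\dots,d\}$, which $d$-dissociativity does not cover. It only suffices for the constant term, i.e.\ for $\|\rho_n\|_{L_1}=1$. Concretely, take $d=1$: the characters $e^{ix}$, $e^{2ix}$ on $\mathbb{T}$ form a $1$-dissociated set. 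Yet the product of the terms $\overline{a_1}e^{-ix}/2$ and $a_2e^{2ix}/2$ contributes to the coefficient at $e^{ix}$, which becomes $a_1/2+\overline{a_1}a_2/4$. With $a_i=R_i(y)$ this is never $R_1(y)/2$.

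So the argument, yours and the paper's alike, is valid only under stronger hypotheses:
\begin{itemize}
\item $2d$-dissociativity (for $d=1$ this is Hewitt--Ross dissociativity, as the introduction notes);
\item no powers $\gamma_i^k$, $1\le k\le d_i$, with $\gamma_i^{2k}\equiv 1$ (or a separate treatment of such factors).
\end{itemize}
Under these assumptions your computation goes through. $2d$-dissociativity gives $\gamma_i^{e_i}=\gamma_i^{\alpha_i}$ factor by factor. Within a single factor, the exclusion rule then guarantees that $\gamma_i^{\alpha_i}$ is produced by exactly one term, namely $a_{i\alpha_i}\gamma_i^{\alpha_i}$ or $\overline{a_{i,m_i-\alpha_i}\gamma_i^{m_i-\alpha_i}}$. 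The conjugation identity then yields \eqref{muycoef}.
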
	
	\begin{proof}
The measure $\rho_y$ is given by \eqref{yyy} with $a_{ik} = R_i^k(y)$, and has total variation $\|\rho_y\| = 1$. Moreover, \eqref{muycoef} and \eqref{alpha'} follow directly from \eqref{rho-main}.
	\end{proof}
	\begin{remark} \label{remark-1}
		Note that the base $2d+1$ for Rademacher functions is chosen due to the fact that in this case, according to the formula \eqref{alpha'}, if $\gamma_{k_1}^{\alpha_1}\cdot\ldots\cdot \gamma_{k_s}^{\alpha_s}$ and $\gamma_{k_1}^{\beta_1} \cdot\ldots \cdot \gamma_{k_s}^{\beta_s}$ various functions, then $R_{k_1}^{\alpha_1'} \cdot\ldots\cdot R_{k_s}^{\alpha_s'}$ and $R_{k_1}^{\beta_1'} \cdot\ldots \cdot R_{k_s}^{\beta_s'}$ --- various functions.
	\end{remark}

	Similarly, we can fix $x\in G$ and consider a measure of the Riesz product type corresponding to the generalized Rademacher system with base $2d + 1$:
	
	\begin{equation} \label{rho_x}
		\rho_x =  weak^*\lim_{n \rightarrow\infty} \prod_{i=0}^{n} \left( 1 + \frac{1}{2d}\left( \sideset{}{'}{\sum}_{k} \gamma_i^{-k}(x) R_i^{k}  + \sideset{}{'}{\sum}_{k} \overline{\gamma_i^{-k}(x) R_i^{k}}\right) \right),
	\end{equation}
Then, for fixed $k_1 < \cdots < k_s$ and $\alpha_j \in \{1, \dots, d_{k_j}\}$, the corresponding coefficients of this measure are given by the formulas:
	\begin{equation} \label{rhox} 
		\begin{split}
			& \quad \widehat{\rho_x}(R_{k_1}^{\alpha_1^{\prime}}(y) \cdot \ldots \cdot R_{k_s}^{\alpha_s^{\prime}}(y)) = \frac{\gamma_{k_1}^{-\alpha_1} \cdot \ldots \cdot \gamma_{k_s}^{-\alpha_s}}{(2d)^{s}}, \\
			& \quad \widehat{\rho_x}(R_{k_1}^{-\alpha_1^{\prime}}(y) \cdot \ldots \cdot R_{k_s}^{-\alpha_s^{\prime}}(y)) = \overline{\widehat{\rho_x}(R_{k_1}^{\alpha_1^{\prime}}(y) \cdot \ldots \cdot R_{k_s}^{\alpha_s^{\prime}}(y))} = \frac{\gamma_{k_1}^{\alpha_1} \cdot \ldots \cdot \gamma_{k_s}^{\alpha_s}}{(2d)^{s}}, 
		\end{split} 
	\end{equation}
	where $\alpha_j^{\prime}$ are defined in \eqref{alpha'}.
	\section{On Lacunarity and Sidon-Type Properties \newline
		 for Polynomial $d$-Chaoses} \label{section-3}
	\begin{theorem} \label{th-0}
Let $\{\gamma_{k}\}_{k=0}^{\infty}$ be a $d$-dissociated system of characters of a compact abelian group, and let $m_k$ be given by \eqref{m_k}. Then, for every $q \ge 1$, including $q = \infty$, and for every polynomial of the form
		\[
		Q^{(s)} := \sum_{0\le k_1 < \ldots < k_s \le N} \; \sum_{\substack{\alpha_1+\ldots+\alpha_s = d \\ 1 \le \alpha_i < m_{k_i}}} C_{k_1, \ldots, k_s}^{\alpha_1, \ldots, \alpha_s} \gamma_{k_1}^{\alpha_1}(x) \cdot \ldots \cdot \gamma_{k_s}^{\alpha_s}(x), \qquad s \le d
		\]
		there exist polynomials 
		\begin{equation} \label{Q-s_R}
			\begin{split}
				& Q^{(s)}_{R, -}(y) := \sum_{0\le k_1 < \ldots < k_s \le N} \; \sum_{\substack{\alpha_1+\ldots+\alpha_s = d \\ 1 \le \alpha_i < m_{k_i}}} C_{k_1, \ldots, k_s}^{\alpha_1, \ldots, \alpha_s} R_{k_1}^{-\alpha^{\prime}_1}(y) \cdot \ldots \cdot R_{k_s}^{-\alpha^{\prime}_s}(y), \\
				& Q^{(s)}_{R, +}(y) := \sum_{0\le k_1 < \ldots < k_s \le N} \; \sum_{\substack{\alpha_1+\ldots+\alpha_s = d \\ 1 \le \alpha_i < m_{k_i}}} C_{k_1, \ldots, k_s}^{\alpha_1, \ldots, \alpha_s} R_{k_1}^{\alpha^{\prime}_1}(y) \cdot \ldots \cdot R_{k_s}^{\alpha^{\prime}_s}(y),
			\end{split}
		\end{equation}
		such that:
		\begin{equation} \label{th-0-main}
			\frac{1}{(2d)^{2d}} \|Q^{(s)}_{R, +}\|_q	\le \|Q^{(s)}\|_q \le (2d)^{2d} \|Q^{(s)}_{R, -}\|_q,
		\end{equation}
		where $\{R_{k}\}_{k=0}^{\infty}$ are Rademacher functions with base $2d+1$ and $\alpha_j'$ are defined by the formula \eqref{alpha'}.
	\end{theorem}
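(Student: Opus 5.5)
The plan is a transference argument: convolve with the Riesz-type measures of Section~\ref{section-2}, then isolate the homogeneous part of order $s$ with the multiplier measure $\nu_s$ from Lemma~\ref{lemma-ppp}.

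\textbf{Upper bound.} Fix $x\in G$ and integrate the Rademacher-side polynomial $Q^{(s)}_{R,-}$ against the measure $\rho_x$ from \eqref{rho_x}. By \eqref{rhox}, pairing $R_{k_1}^{-\alpha_1'}\cdots R_{k_s}^{-\alpha_s'}$ with $\rho_x$ produces $(2d)^{-s}\gamma_{k_1}^{\alpha_1}(x)\cdots\gamma_{k_s}^{\alpha_s}(x)$. A translated version, $y\mapsto \int Q^{(s)}_{R,-}(y\dot+z)\,d\rho_x(z)$, is a function of $(x,y)$. Its $\gamma$-side coefficient on a monomial with $j$ factors carries the factor $(2d)^{-j}$, and other character products may also appear.

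To remove the unwanted terms, I would rather reverse the roles and work with $\rho_y$ from Lemma~\ref{muy}. Define
\[
F(x,y):=\bigl(Q^{(s)}*\rho_y\bigr)(x),
\]
whose coefficients are $(2d)^{-s}C\,R^{\alpha'}(y)$ on the monomials of $Q^{(s)}$. By Remark~\ref{remark-1}, distinct $\gamma$-monomials correspond to distinct $R$-monomials. Then:
\begin{itemize}
\item \emph{Lower bound.} Pointwise in $y$, $\|F(\cdot,y)\|_q\le\|Q^{(s)}\|_q$ since $\|\rho_y\|=1$. Using translation invariance in $x$, one rewrites $F(x,y)$ as $(2d)^{-s}Q^{(s)}_{R,+}$ evaluated at a shifted point, modulo other terms. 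Integrating in $y$ then gives $(2d)^{-s}\|Q^{(s)}_{R,+}\|_q\le\|Q^{(s)}\|_q$, after the filtering described next.
\item \emph{Upper bound.} Symmetrically, use $\rho_x$: $\|Q^{(s)}_{R,-}*\rho_x\|_q\le\|Q^{(s)}_{R,-}\|_q$. The coefficients of this convolution recover $(2d)^{-s}Q^{(s)}(x)$ as the coefficient pattern.
\end{itemize}

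\textbf{Main obstacle.} The difficulty is that the convolution $Q*\rho$ genuinely only multiplies coefficients of the monomials already present. The issue is the transfer from the $x$-variable to the $y$-variable. I would handle this by considering the function on $G\times\mathbb{G}$ given by $H(x,y)=\int Q^{(s)}(x\dot+t)\,d\rho_y(t)$. Via \eqref{muycoef} it equals
\[
(2d)^{-s}\sum C\,\gamma^{\alpha}(x)R^{\alpha'}(y)
\]
plus contributions from other monomials of the Riesz product. Because every monomial of $Q^{(s)}$ has total degree $d$ and $d$-dissociativity forbids coincidences among products with exponents up to $d$, only diagonal terms survive.

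Next, for fixed $x$, the function $y\mapsto H(x,y)$ equals $(2d)^{-s}Q^{(s)}_{R,+}$ twisted by the unimodular phases $\gamma^\alpha(x)$. These phases are absorbed by a shift in $y$ only if they are compatible with the $R$-structure, which they need not be. So I would instead apply the second measure $\rho_x$ in the $y$-variable to undo the phases. Each application costs a factor $(2d)^{-s}$, and terms with the wrong number of factors $j\ne s$ are killed by $\nu_s$ at cost $C_s$. Two applications give the factor $(2d)^{-2s}\ge(2d)^{-2d}$, which matches the constant in \eqref{th-0-main}.

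Finally, Minkowski's integral inequality and the unit total variation of each measure yield both inequalities in \eqref{th-0-main} for all $1\le q\le\infty$. The hardest step is the bookkeeping showing that the cross terms vanish by $d$-dissociativity and that conjugate pairs are handled consistently via \eqref{alpha'}.
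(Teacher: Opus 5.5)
Once the detours are removed, your argument is the paper's. The intermediary is the two-variable polynomial $Q^{(s)}_{\pm}(x,y)$, whose coefficient on $\gamma_{k_1}^{\alpha_1}\cdots\gamma_{k_s}^{\alpha_s}(x)$ is $C^{\alpha_1,\ldots,\alpha_s}_{k_1,\ldots,k_s}R_{k_1}^{\pm\alpha_1'}(y)\cdots R_{k_s}^{\pm\alpha_s'}(y)$. The steps are as follows:
\begin{itemize}
\item For fixed $y$, you pass between $Q^{(s)}$ and $Q^{(s)}_{\pm}(\cdot,y)$ by convolving in $x$ with $\rho_y$.
\item For fixed $x$, you pass between $Q^{(s)}_{\pm}(x,\cdot)$ and $Q^{(s)}_{R,\pm}$ by convolving in $y$ with $\rho_x$.
\item Each step costs $(2d)^s\le(2d)^d$ by Young's inequality.
\item The two pointwise estimates are chained by taking the $L_q$-norm in the remaining variable. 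This uses that $Q^{(s)}(x)$ and $Q^{(s)}_{R,\pm}(y)$ have the same norm on $G\times[0,1)$ as on their own factor.
\end{itemize}
Your lower-bound chain ($H=Q^{(s)}*\rho_y$, then $\rho_x$ in $y$ to strip the phases $\gamma^{\alpha}(x)$) is exactly this. You should, however, state the integration-in-the-other-variable step explicitly, since Young's inequality alone only gives estimates pointwise in $x$ or in $y$.

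As written, several steps are wrong and should be deleted rather than ``handled.''
\begin{itemize}
\item \emph{No cross terms.} Convolution with a measure is a Fourier multiplier, so $Q^{(s)}*\rho_y$ has no ``contributions from other monomials of the Riesz product'' and there is nothing to cancel. $d$-dissociativity enters only through the formula for $\widehat{\rho_y}(\gamma_{k_1}^{\alpha_1}\cdots\gamma_{k_s}^{\alpha_s})$ in Lemma~\ref{muy}. The formula for $\widehat{\rho_x}$ rests instead on the independence of the $R_k$ and the base $2d+1$ (Remark~\ref{remark-1}).
\item \emph{No $\nu_s$.} The measure $\nu_s$ has no role here, because every function in the chain consists only of monomials with exactly $s$ factors. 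If you did insert $\nu_s$ through Young's inequality you would pick up the factor $C_s$, contradicting your claim that the constant is $(2d)^{2d}$. The paper uses $\nu_s$ only later, in Theorem~\ref{THEOREM-2}, to pass from $Q$ to $Q^{(s)}$.
\item \emph{No shift in $y$.} The step yielding $(2d)^{-s}\|Q^{(s)}_{R,+}\|_q\le\|Q^{(s)}\|_q$ is unfounded. The phases $\gamma^{\alpha}(x)$ are arbitrary unimodular numbers, not values $R^{\alpha'}(z)$ at a single point $z$. The argument gives $(2d)^{-2s}$, not $(2d)^{-s}$.
\item \emph{Upper bound incomplete.} Your upper-bound bullet stops halfway: $Q^{(s)}_{R,-}*\rho_x=(2d)^{-s}Q^{(s)}_{-}(x,\cdot)$ is a function of $y$, not $(2d)^{-s}Q^{(s)}$. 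You still have to convolve $Q^{(s)}_{-}(\cdot,y)$ in $x$ with $\rho_y$, where $\widehat{\rho_y}(\gamma^{\alpha})=(2d)^{-s}R^{\alpha'}(y)$ cancels the factors $R^{-\alpha'}(y)$, and then integrate over $y$.
\end{itemize}
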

	\begin{proof}
		Consider the following polynomials from $x\in G$ and $y\in [0,1)$:
		\[
		\begin{split}
			& Q^{(s)}_{-}(x,y) = \sum_{0\le k_1 < \ldots < k_s \le N} \; \sum_{\substack{\alpha_1+\ldots+\alpha_s = d \\ 1 \le \alpha_i < m_{k_i}}} C_{k_1, \ldots, k_s}^{\alpha_1, \ldots, \alpha_s} R_{k_1}^{-\alpha^{\prime}_1}(y) \cdot \ldots \cdot R_{k_s}^{-\alpha^{\prime}_s}(y) \gamma_{k_1}^{\alpha_1}(x) \cdot \ldots \cdot \gamma_{k_s}^{\alpha_s}(x),\\
			& Q^{(s)}_{+}(x,y) = \sum_{0\le k_1 < \ldots < k_s \le N} \; \sum_{\substack{\alpha_1+\ldots+\alpha_s = d \\ 1 \le \alpha_i < m_{k_i}}} C_{k_1, \ldots, k_s}^{\alpha_1, \ldots, \alpha_s} R_{k_1}^{\alpha^{\prime}_1}(y) \cdot \ldots \cdot R_{k_s}^{\alpha^{\prime}_s}(y) \gamma_{k_1}^{\alpha_1}(x) \cdot \ldots \cdot \gamma_{k_s}^{\alpha_s}(x),
		\end{split}
		\]
where the tuple $(\alpha_1', \ldots, \alpha_s')$ is constructed from $(\alpha_1, \ldots, \alpha_s)$ and $(k_1, \ldots, k_s)$ via formula \eqref{alpha'}. For a fixed $y \in [0,1)$, consider the measure $\rho_y$ defined in Lemma \ref{muy}. Applying \eqref{muycoef}, we obtain: 
		\[
		Q^{(s)}_{-}(x,y) * \rho_y(x) = \int_{G} Q^{(s)}_{-} (x\dot-z, y) d \rho_y(z) 
		\]
		\[
		=\sum_{0\le k_1 < \ldots < k_s \le N} \; \sum_{\substack{\alpha_1+\ldots+\alpha_s = d \\ 1 \le \alpha_i < m_{k_i}}} C_{k_1, \ldots, k_s}^{\alpha_1, \ldots, \alpha_s} R_{k_1}^{-\alpha^{\prime}_1}(y) \cdot \ldots \cdot R_{k_s}^{-\alpha^{\prime}_s}(y) \cdot  \gamma_{k_1}^{\alpha_1}(x) \cdot \ldots \cdot \gamma_{k_s}^{\alpha_s}(x) \cdot \widehat{\rho_y}(\gamma_{k_1}^{\alpha_1} \cdot \ldots \cdot \gamma_{k_s}^{\alpha_s})
		\]
		\[
		= \frac{1}{(2d)^s} \sum_{0\le k_1 < \ldots < k_s \le N} \; \sum_{\substack{\alpha_1+\ldots+\alpha_s = d \\ 1 \le \alpha_i < m_{k_i}}} C_{k_1, \ldots, k_s}^{\alpha_1, \ldots, \alpha_s} \gamma_{k_1}^{\alpha_1}(x) \cdot \ldots \cdot \gamma_{k_s}^{\alpha_s}(x) = \frac{Q^{(s)}(x)}{(2d)^s}.
		\]
		Then, by Young's inequality, 
		\begin{equation} \label{Qy}
			\|Q^{(s)}(x)\|_{L_q(x)} = (2d)^{s}\|Q^{(s)}_{-}(x,y)* \rho_y\|_{L_q(x)} \le (2d)^{d}\|Q^{(s)}_{-}(x,y)\|_{L_q(x)}. 
		\end{equation}
		Taking the $L_q$-norm with respect to $y$ of both sides of \eqref{Qy}, we obtain:
		\begin{equation} \label{Qy2}
			\|Q^{(s)} (x)\|_{L_q(x)} \le (2d)^d \|Q^{(s)}_{-}(x,y)\|_{L_q(x)\times L_q(y)}. 
		\end{equation}
		
	Consider the polynomial $Q^{(s)}_{R, -}$, defined by the formula \eqref{Q-s_R}, and consider for a fixed $x\in G$ the measure $\rho_x$, defined by the formula \eqref{rho_x}. Calculate $Q^{(s)}_{R, -}(y)*\rho_x$, taking into account \eqref{rhox},
		\[
		Q^{(s)}_{R, -}(y) * \rho_x(y) = \int_0^1 Q^{(s)}_{R, -} (y\dot-z) d \rho_x(z) 
		\]
		\[
		=\sum_{0\le k_1 < \ldots < k_s \le N} \; \sum_{\substack{\alpha_1+\ldots+\alpha_s = d \\ 1 \le \alpha_i < m_{k_i}}} C_{k_1, \ldots, k_s}^{\alpha_1, \ldots, \alpha_s} R_{k_1}^{-\alpha^{\prime}_1}(y) \cdot \ldots \cdot R_{k_s}^{-\alpha^{\prime}_s}(y) \cdot \widehat{\rho_x}(R_{k_1}^{-\alpha^{\prime}_1}(y) \cdot \ldots \cdot R_{k_s}^{-\alpha^{\prime}_s}(y))
		\]
		\[
		= \frac{1}{(2d)^s} \sum_{0\le k_1 < \ldots < k_s \le N} \; \sum_{\substack{\alpha_1+\ldots+\alpha_s = d \\ 1 \le \alpha_i < m_{k_i}}} C_{k_1, \ldots, k_s}^{\alpha_1, \ldots, \alpha_s} R_{k_1}^{-\alpha^{\prime}_1}(y) \cdot \ldots \cdot R_{k_s}^{-\alpha^{\prime}_s}(y) \gamma_{k_1}^{\alpha_1}(x) \cdot \ldots \cdot \gamma_{k_s}^{\alpha_s}(x) = \frac{Q^{(s)}_{-}(x,y)}{(2d)^s}.
		\]
		Then, by Young's inequality, 
		\begin{equation} \label{Qx}
			\|Q^{(s)}_{-}(x,y)\|_{L_q(y)} = (2d)^{s}\|Q^{(s)}_{R,-}(y)* \rho_x\|_{L_q(y)} \le (2d)^{d}\|Q^{(s)}_{R,-}(y)\|_{L_q(y)}. 
		\end{equation}
		Taking the $L_q$-norm with respect to $x$ of both sides of \eqref{Qx}, we obtain:
		\begin{equation} \label{Qx2}
			\|Q^{(s)}_{-}(x,y)\|_{L_q(x)\times L_q(y)} \le (2d)^{d}\|Q^{(s)}_{R,-}(y)\|_{L_q(y)}.
		\end{equation}
		From \eqref{Qx2} and \eqref{Qy2} we get:
		\[
		\|Q^{(s)} (x)\|_{L_q(x)} \le (2d)^{2d} \|Q^{(s)}_{R,-}(y)\|_{L_q(y)}.
		\]
		Thus, the right inequality in \eqref{th-0-main} is proved.
		
        The left-hand inequality is proved similarly. For a fixed $y \in [0,1]$, using \eqref{muycoef}, we obtain:  
		\[
		Q^{(s)} * \rho_y = \int_{G} Q^{(s)} (x\dot-z) d\rho_y(z) = \frac{Q^{(s)}_{+}(x,y)}{(2d)^s}.
		\]
Applying Young's inequality and taking the $L_q$-norm with respect to $y$ (as in \eqref{Qy} and \eqref{Qy2}), we obtain:
		\begin{equation} \label{Qy2-l}
			\frac{1}{(2d)^d}\|Q^{(s)}_{+}(x,y)\|_{L_q(x)\times L_q(y)} \le \|Q^{(s)} (x)\|_{L_q(x)}.
		\end{equation}
		Taking into account \eqref{rhox} and \eqref{Q-s_R}, we obtain:
		\[
		Q^{(s)}_{+}(x,y) * \rho_x = \int_0^1 Q^{(s)}_{+} (x, y \dot-z)d\rho_x(z) = \frac{Q^{(s)}_{R, +}(y)}{(2d)^s}.
		\]
		Applying Young's inequality and taking the $L_q$-norm with respect to $x$ (as in \eqref{Qx} and \eqref{Qx2}), we obtain:
		\begin{equation} \label{Qx2-l}
			\frac{1}{(2d)^{d}} \|Q^{(s)}_{R,+}(y)\|_{L_q(y)}  \le  \|Q^{(s)}_{+}(x,y)\|_{L_q(x)\times L_q(y)}.
		\end{equation}
		From \eqref{Qy2-l} and \eqref{Qx2-l} we get the left inequality in \eqref{th-0-main}:
		\[
		\frac{1}{(2d)^{2d}} \|Q^{(s)}_{R,+}(y)\|_{L_q(y)} \le \|Q^{(s)} (x)\|_{L_q(x)}.
		\]	 
		This completes the proof.
	\end{proof}

	\begin{theorem} \label{THEOREM-1} 
The polynomial chaos $B^d_{\gamma}$, constructed according to the formula \eqref{Bphid} for some $d$-dissociated subsystem $\{\gamma_{i}(x)\}_{i=1}^{\infty}$ of the characters of a compact abelian group, is a system $q$-lacunarities for all $q>2$. That is, for every polynomial with respect to the system $B^d_{\gamma}$:
		\begin{equation}\label{chaosQ}
			Q := \sideset{}{'}{\sum} A_{k_1, \ldots, k_d} \gamma_{k_1}(x) \cdot \ldots \cdot \gamma_{k_d}(x), \quad A_{k_1, \ldots, k_d} \in \mathbb{C},
		\end{equation} 
 where the primed sum $\sum'$ runs over non-decreasing sequences $0 \leq k_1 \leq \cdots \leq k_d \leq N$ such that each index $j \in \{0, 1, \ldots, N\}$ appears in $(k_1, \ldots, k_d)$ fewer than $m_j$ times (with $m_j$ given by \eqref{m_k}), the $L_2$-$L_q$ Khintchine inequality holds:
		\begin{equation} \label{THEOREM-1-eq}
			\|Q\|_{q} \le C \left(\sideset{}{'}{\sum} A_{k_1, \ldots, k_d}^2\right)^{\frac{1}{2}}, \quad C>0 \; \text{does not depend on the polynomial $Q$}.
		\end{equation}
	\end{theorem}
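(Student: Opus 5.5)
The plan is to reduce Theorem~\ref{THEOREM-1} to the model case, Theorem~C, via the comparison inequality of Theorem~\ref{th-0}.

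\emph{Regrouping.} Write each monomial $\gamma_{k_1}\cdots\gamma_{k_d}$ with $k_1\le\dots\le k_d$ as $\gamma_{j_1}^{\alpha_1}\cdots\gamma_{j_s}^{\alpha_s}$, where $j_1<\dots<j_s$ are the distinct indices, $\alpha_i$ are their multiplicities, $\alpha_1+\dots+\alpha_s=d$, and $1\le\alpha_i<m_{j_i}$. This splits $Q=\sum_{s=1}^d Q^{(s)}$, with each $Q^{(s)}$ exactly of the form treated in Theorem~\ref{th-0}. The coefficient of $Q^{(s)}$ attached to $(j,\alpha)$ is $A_{k_1,\dots,k_d}$ for the unique non-decreasing tuple encoding it. Hence the coefficient families of the different $Q^{(s)}$ are disjoint and together exhaust the coefficients of $Q$.

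\emph{Estimating each piece.} By the triangle inequality, $\|Q\|_q\le\sum_{s=1}^d\|Q^{(s)}\|_q$. By the right inequality of \eqref{th-0-main}, $\|Q^{(s)}\|_q\le(2d)^{2d}\|Q^{(s)}_{R,-}\|_q$. The polynomial $Q^{(s)}_{R,-}$ is a polynomial in products $R_{j_1}^{-\alpha_1'}\cdots R_{j_s}^{-\alpha_s'}$ of distinct base-$(2d+1)$ Rademacher functions. Each exponent satisfies $-\alpha_i'\not\equiv0 \pmod{2d+1}$, because $1\le\alpha_i'\le 2d$ by \eqref{alpha'}. So every such product, rewritten with reduced exponents in $\{1,\dots,2d\}$, belongs to $C^s_R$, the chaos of degree $s$ built over the system with base $2d+1$.

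By Remark~\ref{remark-1}, distinct monomials $\gamma^\alpha$ give distinct Rademacher monomials, and this survives the sign change because negation is a bijection on exponents. Therefore $Q^{(s)}_{R,-}$ has exactly the same coefficients as $Q^{(s)}$, placed on distinct elements of $C^s_R$. Theorem~C with $p=2d+1$ and degree $s$ then gives
\[
\|Q^{(s)}_{R,-}\|_q\le\kappa_s(q)\Bigl(\sum |C^{\alpha}_{j}|^2\Bigr)^{1/2}\le\kappa_s(q)\Bigl(\sideset{}{'}{\sum}|A_{k_1,\dots,k_d}|^2\Bigr)^{1/2}.
\]

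\emph{Conclusion.} Summing over $s\le d$ yields \eqref{THEOREM-1-eq} with $C=(2d)^{2d}\sum_{s\le d}\kappa_s(q)$, which is independent of $N$ and of the coefficients. Here we read $A^2$ in \eqref{THEOREM-1-eq} as $|A|^2$. For $q\le2$ the bound is trivial anyway by orthonormality, since distinct monomials are distinct characters by $d$-dissociativity.

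\emph{Main obstacle.} The delicate step is verifying that the map $(j,\alpha)\mapsto R_{j_1}^{-\alpha_1'}\cdots R_{j_s}^{-\alpha_s'}$ is injective and lands in the chaos of Theorem~C. Injectivity matters because collisions would merge coefficients and destroy the $\ell_2$ comparison. The worry is the case where $\gamma_{k}^{\alpha}$ and $\gamma_k^{\beta}$ are conjugate and both appear in $Q$, which can happen when $m_k$ is small relative to $d$. Then \eqref{alpha'} sends them to exponents $\alpha$ and $2d+1-\alpha'$, and one must check these are distinct modulo $2d+1$. I would handle this by a direct check from \eqref{alpha'}, using that $\alpha_i\le d<2d+1-\alpha$ for any $\alpha\le d$, together with Remark~\ref{remark-1}. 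I would also confirm that the $m_k$-constraint in Theorem~\ref{th-0} matches the multiplicity constraint in $\sum'$.
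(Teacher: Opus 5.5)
Your proposal is correct and follows essentially the same route as the paper: you split $Q=\sum_{s}Q^{(s)}$, apply the right-hand inequality of Theorem~\ref{th-0} to each piece, invoke Theorem~C for $Q^{(s)}_{R,-}$ (base $2d+1$, degree $s$), and use Remark~\ref{remark-1} to identify the coefficient $\ell_2$-norms. The differences are minor: the paper combines the pieces via Minkowski plus Cauchy--Schwarz ($\|Q\|_q^2\le d\sum_s\|Q^{(s)}\|_q^2$) rather than bounding each piece by the full $\ell_2$-norm, which only changes the constant, and your explicit injectivity check of $\alpha\mapsto\alpha'$ spells out what the paper simply takes from Remark~\ref{remark-1}.
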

	\begin{proof} We write the polynomial $Q$ from \eqref{chaosQ} in the form
		\begin{equation} \label{C-A}
			\sum_{s=1}^{d} \; \sum_{0\le k_1 < \ldots < k_s \le N} \; \sum_{\substack{\alpha_1+\ldots+\alpha_s = d \\ 1 \le \alpha_i < m_{k_i}}} C_{k_1, \ldots, k_s}^{\alpha_1, \ldots, \alpha_s} \gamma_{k_1}^{\alpha_1}(x) \cdot \ldots \cdot \gamma_{k_s}^{\alpha_s}(x) =: \sum_{s=1}^{d} Q^{(s)},  
		\end{equation} 
		\begin{equation*} 
			\text{where}\; C_{k_1, \dots, k_s}^{\alpha_1, \dots, \alpha_s} = A_{\underbrace{k_1, \dots, k_1}_{\alpha_1}, \underbrace{k_2, \dots, k_2}_{\alpha_2}, \dots, \underbrace{k_s, \dots, k_s}_{\alpha_s}}.
		\end{equation*}
		It is enough to prove the theorem for the polynomial $Q^{(s)}$, since by the Minkowski and Cauchy-Bunyakovsky inequalities, we have
		\begin{equation} \label{MK}
			\|Q\|^2_q \le \left(\sum_{s=1}^d \|Q^{(s)}\|_q\right)^2 \le d \sum_{s=1}^d \|Q^{(s)}\|_q^2. 
		\end{equation}
By Theorem~\ref{th-0}, for the polynomial $Q^{(s)}$ there exists a polynomial $Q^{(s)}_{R,-}$, given by \eqref{Q-s_R}, such that: 
		\begin{equation} \label{q}
			\|Q^{(s)}\|_q \le (2d)^{2d}\|Q^{(s)}_{R, -}\|_q.
		\end{equation}
	By Theorem C, for the polynomial $Q^{(s)}_{R, -}$, we have:
		\begin{equation} \label{1-C}
        \begin{split}
            \|Q^{(s)}_{R, -}\|_q \le \kappa \biggl(\sum_{0\le k_1 < \ldots < k_s \le N} \; \sum_{\substack{\alpha_1+\ldots+\alpha_s = d \\ 1 \le \alpha_i < m_{k_i}}} \left(C_{k_1, \ldots, k_s}^{\alpha_1, \ldots, \alpha_s}\right)^2\biggl)^{\frac{1}{2}}, & \\
            \text{$\kappa>0$ does not depend on $C_{k_1, \ldots, k_s}^{\alpha_1, \ldots, \alpha_s}$},&
        \end{split}
		\end{equation}
		where the $\ell_2$-norm of the coefficients of the polynomial $Q^{(s)}_{R, -}$ is given by \eqref{1-C} and coincides with the $\ell_2$-norm of the coefficients of $Q^{(s)}$ by Remark \ref{remark-1}. Combining \eqref{MK}, \eqref{q}, and \eqref{1-C}, we obtain:
		\[
		\|Q\|_q \le C \biggl(\sum_{s=1}^{d}\sum_{0\le k_1 < \ldots < k_s \le N} \; \sum_{\substack{\alpha_1+\ldots+\alpha_s = d \\ 1 \le \alpha_i < m_{k_i}}} \left(C_{k_1, \ldots, k_s}^{\alpha_1, \ldots, \alpha_s}\right)^2\biggl)^{\frac{1}{2}}, \qquad C := \sqrt{d \kappa} (2d)^{2d},
		\]
		which is exactly formula \eqref{THEOREM-1-eq}. This completes the proof.
	\end{proof}	
	\begin{theorem} \label{THEOREM-2} 
		The polynomial chaos $B^d_{\gamma}$, constructed according to the formula \eqref{Bphid} for some $d$-dissociated subsystem $\{\gamma_{i}(x)\}_{i=1}^{\infty}$ of the characters of a compact abelian group, is a $\frac{2d}{d+1}$-Sidon system, i.e. for every polynomial of the form \eqref{chaosQ} with respect to the system $B^d_{\gamma}$, the $\frac{2d}{d+1}$-Sidon inequality holds:
		\begin{equation} \label{THEOREM-2-eq}
			\left(\sideset{}{'}{\sum} A_{k_1, \ldots, k_d}^{\frac{2d}{d+1}}\right)^{\frac{d+1}{2d}} \le C\|Q\|_{\infty}, \quad C>0 \; \text{does not depend on the polynomial $Q$},
		\end{equation}
		where $\Sigma'$ is defined in the formula \eqref{chaosQ}.
	\end{theorem}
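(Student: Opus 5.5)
Following the proof of Theorem~\ref{THEOREM-1}, I would decompose $Q$ as in \eqref{C-A} into $Q=\sum_{s=1}^d Q^{(s)}$. I would then reduce to the pieces $Q^{(s)}$ and transfer each piece to the model system $C^d_R$ with base $2d+1$, where Theorem~D applies.

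The first step is to isolate $Q^{(s)}$ from $Q$ without losing more than a constant in the $L_\infty$-norm. Here I would use the measure $\nu_s$ of Lemma~\ref{lemma-ppp}. Every character $\gamma_{k_1}^{\alpha_1}\cdots\gamma_{k_j}^{\alpha_j}$ appearing in $Q$ has $j\le d$ distinct indices and exponents $1\le\alpha_i\le d_{k_i}$. The exponents may need to be normalized via $\overline{\gamma^{k}}=\gamma^{k'}$, in the same way as in \eqref{rho-main}. Granting this, $\widehat{\nu}_s$ equals $1$ on the terms of $Q^{(s)}$ and $0$ on the terms of $Q^{(j)}$ for $j\ne s$. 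Hence $Q*\nu_s=Q^{(s)}$, and Young's inequality gives $\|Q^{(s)}\|_\infty\le C_s\|Q\|_\infty$, with $C_s$ from \eqref{2804}.

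The second step applies the left inequality of Theorem~\ref{th-0} with $q=\infty$:
\[
\|Q^{(s)}_{R,+}\|_\infty\le (2d)^{2d}\|Q^{(s)}\|_\infty\le (2d)^{2d}C_s\|Q\|_\infty .
\]
The polynomial $Q^{(s)}_{R,+}$ is a polynomial with respect to the system $C^d_R$ built from generalized Rademacher functions with base $2d+1$. Each of its terms has total degree at most $d$ in the exponents $\alpha_i'\le 2d$. By Remark~\ref{remark-1}, distinct terms of $Q^{(s)}$ go to distinct characters, so the coefficient sequence of $Q^{(s)}_{R,+}$ is exactly $(C^{\alpha}_{k})$. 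Theorem~D then yields
\[
\Bigl(\sum |C_{k_1,\ldots,k_s}^{\alpha_1,\ldots,\alpha_s}|^{\frac{2d}{d+1}}\Bigr)^{\frac{d+1}{2d}}\le K\,\|Q^{(s)}_{R,+}\|_\infty .
\]
Some care is needed about the degree parameter of Theorem~D. The exponents $\alpha'$ can be as large as $2d$, so one may have to invoke Theorem~D with degree $2d$ rather than $d$, which would only give a $\frac{4d}{2d+1}$ exponent. I expect this bookkeeping to be the main obstacle. The way around it is that the number of distinct indices is $s\le d$. The $\frac{2d}{d+1}$-Sidon property of $C^d_R$ really depends on $s$-linearity in the distinct Rademacher variables, as in Blei's and Littlewood's arguments. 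So I would apply Theorem~D in the form for at most $d$ distinct factors with arbitrary nonzero exponents, possibly after noting that $C^s_R$ with $s\le d$ is $\frac{2s}{s+1}\le\frac{2d}{d+1}$-Sidon.

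Finally, I would sum over $s$. The coefficients $A_{k_1,\ldots,k_d}$ are in bijection with the $C$'s. Writing $p=\frac{2d}{d+1}$, I would use $\bigl(\sum_s a_s\bigr)^{1/p}\le \sum_s a_s^{1/p}$ to get
\[
\Bigl(\sideset{}{'}{\sum}|A|^{p}\Bigr)^{1/p}\le \sum_{s=1}^d K(2d)^{2d}C_s\|Q\|_\infty ,
\]
which is \eqref{THEOREM-2-eq} with $C=K(2d)^{2d}\sum_s C_s$.
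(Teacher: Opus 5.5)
Your proposal is correct and follows the paper's proof essentially step for step. You isolate $Q^{(s)}=Q*\nu_s$ via Lemma~\ref{lemma-ppp}, pass to $Q^{(s)}_{R,+}$ through the left inequality of Theorem~\ref{th-0} with $q=\infty$, apply Theorem~D with Remark~\ref{remark-1}, and sum over $s$ using subadditivity of $t\mapsto t^{(d+1)/(2d)}$, where the paper instead raises to the power $\frac{2d}{d+1}$ and uses $M=\max_s C_s$; this difference is immaterial. The degree issue you flag is not a real obstacle: in \eqref{Cphid} the degree counts distinct indices while the exponents range freely over $1\le j_i<2d+1$, so $Q^{(s)}_{R,+}$ is a $C^s_R$-polynomial, and your observation $\frac{2s}{s+1}\le\frac{2d}{d+1}$ is exactly the step the paper leaves implicit when it cites Theorem~D.
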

	\begin{proof}
		We show that it suffices to prove the statement for a polynomial of the form $Q^{(s)}$ defined in \eqref{C-A}. Consider the measure $\nu_s$ defined in Lemma \ref{lemma-ppp}. We obtain the following chain of equalities:
		\begin{equation*}
			\begin{split}
				& Q * \nu_s = \int_0^1 Q(x \dot- y) \, d\nu_s(y) = \\
				&= \sum_{s=1}^{d} \; \sum_{0\le k_1 < \ldots < k_s \le N} \; \sum_{\substack{\alpha_1+\ldots+\alpha_s = d \\ 1 \le \alpha_i < m_{k_i}}} C_{k_1, \ldots, k_s}^{\alpha_1, \ldots, \alpha_s} \gamma_{k_1}^{\alpha_1}(x) \cdot \ldots \cdot \gamma_{k_s}^{\alpha_s}(x)\cdot \widehat{\nu_s}(\gamma_{k_1}^{\alpha_1} \cdot \ldots \cdot \gamma_{k_s}^{\alpha_s}) = Q^{(s)}.
			\end{split}
		\end{equation*}
		
		Then, using Young's inequality for convolution and formula \eqref{2804}, we obtain 
		\begin{equation} \label{28041900}
			\|Q^{(s)} \|_{\infty} =\|Q * \nu_s \|_{\infty}  \le \|\nu_s\|\|Q \|_{\infty}  \le C_s\|Q \|_{\infty}.
		\end{equation}
		Let's raise both sides of inequality \eqref{28041900} to the power $\frac{2d}{d+1}$ and sum over $s$ from 1 to $d$:
		\begin{equation} \label{4-}
			\sum_{s=1}^{d}\|Q^{(s)}\|_{\infty}^{\frac{2d}{d+1}} \le d M^{\frac{2d}{d+1}}\|Q\|_{\infty}^{\frac{2d}{d+1}}, \quad \text{where $M=\max\{C_1, \ldots C_{d}\}$.}
		\end{equation}
		By Theorem~\ref{th-0}, for the polynomial $Q^{(s)}$ there exists a polynomial $Q^{(s)}_{R,+}$, given by \eqref{Q-s_R}, such that:
		\begin{equation} \label{infty}
			\|Q^{(s)}_{R, +}\|_{\infty} \le (2d)^{2d}\|Q^{(s)}\|_{\infty}.
		\end{equation}
		By Theorem D and Remark \ref{remark-1}, for the polynomial $Q^{(s)}_{R, +}$ we have:
		\begin{equation} \label{2-C}
        \begin{split}
            \biggl(\sum_{0\le k_1 < \ldots < k_s \le N}\sum_{\substack{\alpha_1+\ldots+\alpha_s = d \\ 1 \le \alpha_i < m_{k_i}}} \left(C_{k_1, \ldots, k_s}^{\alpha_1, \ldots, \alpha_s}\right)^{\frac{2d}{d+1}}\biggl)^{\frac{d+1}{2d}} \le c\|Q^{(s)}_{R, +}\|_{\infty}, & \\
            \text{$c>0$ does not depend on $C_{k_1, \ldots, k_s}^{\alpha_1, \ldots, \alpha_s}$}. &
        \end{split}
		\end{equation}
		Combining \eqref{4-}, \eqref{infty}, and \eqref{2-C}, we obtain:
		\[
		\biggl(\sum_{s=1}^{d}\sum_{0\le k_1 < \ldots < k_s \le N} \; \sum_{\substack{\alpha_1+\ldots+\alpha_s = d \\ 1 \le \alpha_i < m_{k_i}}} \left(C_{k_1, \ldots, k_s}^{\alpha_1, \ldots, \alpha_s}\right)^{\frac{2d}{d+1}}\biggl)^{\frac{d+1}{2d}}  \le C\|Q\|_{\infty}, \qquad C := (2d)^{2d}cMd^{\frac{d+1}{2d}},
		\]
	which is exactly formula \eqref{THEOREM-2-eq}. This completes the proof.
	\end{proof}
	\section{Necessary Condition for the $\alpha$-Sidon Property of Vilenkin Subsystems} \label{section-6}
	\begin{theorem} \label{theorem-a}
Consider a subsystem of the Vilenkin system with indices in $A$, and let
\[
F_A^N = \bigl\{ x \in [0,1) : V_n(x) = 1 \text{ for all } n \in A \cap B_N \bigr\},
\]
where $B_N$ is defined in \eqref{B_N}. Then, if this subsystem is an $\alpha$-Sidon system for some $1 \le \alpha < 2$, it follows that for every nonempty subset $A' \subset A$,
		\begin{equation} \label{th-a-1}
			|A^{\prime} \cap B_N | \le C \cdot \left(\log\left(\frac{1}{\mu(F_{A^{\prime}}^{N})}\right)\right)^{\frac{\alpha}{2-\alpha}}
		\end{equation}
	\end{theorem}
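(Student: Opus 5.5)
Since $\alpha$-Sidonicity passes to subsets, it suffices to treat $A'=A$. Put $n:=|A\cap B_N|$ and $F:=F_A^N$. The plan is to test the $\alpha$-Sidon inequality against a random polynomial $P=\sum_{m\in A\cap B_N}\varepsilon_m V_m$, with $\varepsilon_m=\pm1$ independent signs. The left side of the $\alpha$-Sidon inequality is then exactly $n^{1/\alpha}$. To bound $\|P\|_\infty$ from above, use the structure of $F$. $F$ is a subgroup of $\mathbb{G}_{\mathcal P}$: it is the annihilator of the finite set of characters $A\cap B_N$. It is also a union of cosets of the subgroup $\{g: g_0=\dots=g_{N-1}=0\}$, because every function in $B_N$ depends only on the first $N$ coordinates. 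Each $V_m$ with $m\in A\cap B_N$ is therefore constant on the cosets $x\dot+F$. Since $G/F$ is finite, $P$ is a function on a finite group with $K:=1/\mu(F)$ points. (Here $\mu(F)=|F_N|/P_N$, where $F_N$ is the image of $F$ in $\prod_{k<N}\mathbb{Z}_{p_k}$.)

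For the upper bound on $\|P\|_\infty$, use the standard Salem--Zygmund/Kahane union bound on this finite set of $K$ points. Fix a point $x$. By the Hoeffding (subgaussian) tail for Rademacher sums with unimodular coefficients, applied separately to real and imaginary parts,
\[
\mathbb{P}\bigl(|P(x)|>t\bigr)\le 4\exp\bigl(-t^2/(4n)\bigr).
\]
A union bound over the $K$ cosets then gives $\mathbb{P}(\|P\|_\infty>t)\le 4K e^{-t^2/(4n)}$. Taking $t=C_0\sqrt{n\log(eK)}$ makes this probability less than $1$. Hence some choice of signs satisfies
\[
\|P\|_\infty\le C_0\sqrt{n\,\log(e/\mu(F))}.
\]

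Now combine with the $\alpha$-Sidon inequality: $n^{1/\alpha}\le C_S\|P\|_\infty\le C_S C_0\, n^{1/2}(\log(e/\mu(F)))^{1/2}$. This rearranges to $n^{\frac{2-\alpha}{2\alpha}}\lesssim (\log(e/\mu(F)))^{1/2}$, that is, $n\lesssim (\log(e/\mu(F)))^{\frac{\alpha}{2-\alpha}}$, which is \eqref{th-a-1}.

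The only delicate point, and the main obstacle, is the degenerate case. Because of the $e$ inside the logarithm, we need $\log(1/\mu(F))\gtrsim 1$ in order to replace $\log(e/\mu(F))$ by $\log(1/\mu(F))$. If $\mu(F)=1$, then every $V_m$ with $m\in A\cap B_N$ is trivial on the whole group, so $A\cap B_N\subset\{0\}$ and $n\le1$. If $A\cap B_N$ contains a nontrivial character, it is nontrivial on $G/F$, so $\mu(F)\le 1/2$ and $\log(1/\mu(F))\ge\log 2$. In that case $\log(e/\mu(F))\le (1+1/\log2)\log(1/\mu(F))$, and the constant is absorbed into $C$. When $A\cap B_N=\{V_0\}$ the right-hand side of \eqref{th-a-1} is $0$ while the left-hand side is $1$, so the theorem implicitly takes $0\notin A$ (the trivial character), or else the $\log$ should be read as $\log(e/\cdot)$. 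I would state this convention explicitly.
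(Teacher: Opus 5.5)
Your proof is correct, but it reaches the key estimate $\|P\|_\infty\lesssim\sqrt{n\log(1/\mu(F))}$ by a different route than the paper. The paper also tests the $\alpha$-Sidon inequality on $\sum_{n\in A'\cap B_N} r_n(t)V_n$. It then averages over $t$ and bounds $\mathbb{E}_t\|\cdot\|_\infty$ by the Marcus--Pisier theorem (Theorem 1.4 of their book). That theorem is an entropy/rearrangement estimate of the form $\sqrt{n}+\int_0^1 \bar\sigma(u)\,u^{-1}(\log(4/u))^{-1/2}\,du$, where $\sigma(x)=(\sum|V_n(x)-1|^2)^{1/2}$. The paper uses only two facts about $\sigma$: $\sigma\le 2\sqrt n$, and $\sigma=0$ on $F$. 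Hence the integral runs over $[\mu(F),1]$ and evaluates to $\lesssim\sqrt{n\log(1/\mu(F))}$.

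You instead use the algebraic structure. $F$ is the annihilator of $A'\cap B_N$, so it is a subgroup of index $1/\mu(F)$, and every $V_n$ is constant on its cosets. The random polynomial therefore lives on $1/\mu(F)$ points, and Hoeffding plus a union bound (Salem--Zygmund) suffices. This produces one good sign pattern rather than an expectation bound, which is all that is needed. Your argument is elementary and self-contained, and it shows that the logarithm is simply the log of the number of points. The paper's argument is a one-line appeal to a heavy general tool. It does not require identifying the quotient, and it could in principle exploit finer information about the distribution of $\sigma$.

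Your handling of the degenerate case is also more careful than the paper's. Passing from \eqref{th-a-4} to \eqref{th-a-7}, the paper silently absorbs the $\sqrt{n}$ term and the $\log 4$ into $\sqrt{n\log(1/\mu(F))}$. That step needs exactly your observation that $\mu(F)\le 1/2$ once a nontrivial character is present. You are also right that the statement as written fails when $A'\cap B_N=\{V_0\}$. So your convention ($0\notin A$, or reading the logarithm as $\log(e/\cdot)$) is genuinely needed.
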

	\begin{proof}
		For a fixed $t$, consider the polynomial with respect to the Vilenkin system of the following form: $\sum_{n\in A} r_n(t)V_n(x)$, where $r_i(t)$ are Rademacher functions. Assume that the Vilenkin subsystem indexed by $A$ is an $\alpha$-Sidon system. Then, for every subset $A' \subset A$:
		\begin{equation} \label{th-a-2}
			\left(\sum_{n \in A^{\prime} \cap B_N} |r_n(t)|^{\alpha}\right)^{\frac{1}{\alpha}} \le C_1 \left\|\sum_{n \in A^{\prime} \cap B_N} r_n(t)V_n(x)\right\|_{\infty},
		\end{equation}
	where the constant $C_1$ does not depend on the polynomial.
		
		Taking the expectation with respect to $t$ of both sides of \eqref{th-a-2}, we obtain:
		\begin{equation} \label{th-a-3}
			|A^{\prime} \cap B_N |^{\frac{1}{\alpha}} \le C_1 \mathbb{E}_t \left\|\sum_{n \in A^{\prime} \cap B_N} r_n(t)V_n(x)\right\|_{\infty}.
		\end{equation}
		According to Theorem 1.4. from \cite{Marcus-Pisier}, 
		\begin{equation} \label{th-a-4}
			\mathbb{E}_t \left\|\sum_{n \in A^{\prime} \cap B_N} r_n(t)V_n(x)\right\|_{\infty} \le C_2 \left( \sqrt{|A^{\prime} \cap B_N |} + \int_0^1 \frac{\bar{\sigma}(u)}{u \sqrt{\log(\frac{4}{u})}} du \right),
		\end{equation}
		where $\bar{\sigma}(x)$ denote the non-decreasing rearrangement of the function  \[\sigma(x) =  \left(\sum_{n \in A^{\prime} \cap B_N} \bigl|V_n(x)-1\bigl|^2 \right)^{\frac{1}{2}}.\]
		Then we get, \begin{equation} \label{th-a-5} 0 \le \sigma(x) \le 2 \sqrt{|A^{\prime} \cap B_N |}, \qquad \sigma(x) = 0 , \; x \in F_{A^{\prime}}^N. 
		\end{equation}
		Substituting \eqref{th-a-5} into \eqref{th-a-4}, we obtain:
		\begin{equation} \label{th-a-6}
			\mathbb{E}_t \left\|\sum_{n \in A^{\prime} \cap B_N} r_n(t)V_n(x)\right\|_{\infty} \le C_3 \int_{\mu(F_{A^{\prime}}^N)}^1 \frac{\sqrt{|A^{\prime} \cap B_N |}}{u \sqrt{\log(\frac{4}{u})}} du,
		\end{equation}
		Evaluating the integral on the right-hand side of \eqref{th-a-6}, we obtain:
		\begin{equation} \label{th-a-7}
			\mathbb{E}_t \left\|\sum_{n \in A^{\prime} \cap B_N} r_n(t)V_n(x)\right\|_{\infty} \le C_4 \sqrt{|A^{\prime} \cap B_N|} \sqrt{\log \biggl(\frac{1}{\mu(F_{A^{\prime}}^{N})}\biggl)}
		\end{equation}	
	    Combining \eqref{th-a-3} and \eqref{th-a-7}, we obtain:
		\[
		|A^{\prime} \cap B_N |^{\frac{1}{\alpha}}\le C_5 \sqrt{|A^{\prime} \cap B_N|} \sqrt{\log \biggl(\frac{1}{\mu(F_{A^{\prime}}^{N})}\biggl)}
		\]
		Then, 
		\begin{equation} \label{th-a-8}
			|A^{\prime} \cap B_N |^{\frac{1}{\alpha} - \frac{1}{2}}\le C_5 \sqrt{\log \biggl(\frac{1}{\mu(F_{A^{\prime}}^{N})}\biggl)}
		\end{equation}	
		From \eqref{th-a-8} we obtain the estimate \eqref{th-a-1}.
	\end{proof}
	
	The following corollary follows from Theorem~\ref{theorem-a} and has been proved by a different method in \cite[Ch.~VII, \S~11]{Blei}.
	\begin{corollary}
		The polynomial chaos-type subsystem $C^d_R$ of the Vilenkin system with constant $p$, constructed via \eqref{Cphid}, fails to have the $\alpha$-Sidon property for $\alpha < \frac{2d}{d+1}$.
	\end{corollary}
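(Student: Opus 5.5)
Since $C^d_R\supseteq A^d_R$, the Sidon-type inequality for $C^d_R$ restricts to every subset, so I will apply Theorem~\ref{theorem-a} with $A$ the index set of $C^d_R$ and a well-chosen $A'\subset A$, namely the tetrahedral part built from the first $N$ Rademacher functions. Concretely, I take
\[
A'=\{\,n:\ V_n=R_{k_1}\cdots R_{k_d},\ 0\le k_1<\dots<k_d<N\,\},
\]
which lies in $A\cap B_N$ because each such product depends only on coordinates $g_0,\dots,g_{N-1}$, and $|A'\cap B_N|=\binom{N}{d}\ge c_d N^d$ for $N\ge 2d$.

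The key step is bounding $\mu(F^N_{A'})$ from below. The set $F^N_{A'}$ consists of the points $g$ with $\omega_p^{g_{k_1}+\dots+g_{k_d}}=1$ for all $d$-subsets of $\{0,\dots,N-1\}$. It certainly contains every $g$ with $g_0=\dots=g_{N-1}=0$, a cylinder of measure $p^{-N}$. Hence
\[
\log\frac{1}{\mu(F^N_{A'})}\le N\log p .
\]
(One could compute the set exactly: for $N>d$, comparing two subsets that differ in one element forces all $g_k$ to be equal mod $p$ with $d g_k\equiv 0$. That only improves constants, so the crude bound suffices.)

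If $C^d_R$ were $\alpha$-Sidon with $1\le\alpha<2$, inequality \eqref{th-a-1} would give
\[
c_dN^d\le C\,(N\log p)^{\frac{\alpha}{2-\alpha}}
\]
for all large $N$. This forces $d\le \frac{\alpha}{2-\alpha}$, which is equivalent to $\alpha\ge \frac{2d}{d+1}$. So for $\alpha<\frac{2d}{d+1}$ we get a contradiction as $N\to\infty$.

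Little obstacle is expected. The only points needing care are, first, that Theorem~\ref{theorem-a} is stated for a subsystem $A$ and applied to a subset $A'$ of it, and second, that $p$ is constant, so $\log p$ is a fixed constant and does not erode the power of $N$. That second point is exactly what will fail for the unbounded systems treated later.
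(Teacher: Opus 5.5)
Your proof is correct and follows essentially the same route as the paper: apply Theorem~\ref{theorem-a}, bound $\mu(F^N_{A'})$ from below by the measure $p^{-N}$ of the cylinder $\{g_0=\dots=g_{N-1}=0\}$, and note that $N^d\lesssim N^{\frac{\alpha}{2-\alpha}}$ is impossible when $\alpha<\frac{2d}{d+1}$. The only difference is that the paper takes $A'=A$, i.e.\ all of $C^d_R\cap B_N$ with order $N^d$ elements, whereas you restrict to the tetrahedral products $R_{k_1}\cdots R_{k_d}$; this does not affect the argument.
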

	\begin{proof}
		Let the set $B_N$, as before, denote the first $p^N$ Vilenkin functions, and let $A$ denote the set of indices of the functions in $C^d_R$, viewed as a subsystem of the Vilenkin system. Then $\mu(F_A^N) \ge p^{-N}$ and $|A \cap B_N | = C_N^d p^d \sim \frac{N^d}{d!} p^d$. Substituting into \eqref{th-a-1}, we obtain
		\begin{equation} \label{corollary-a}
			\frac{N^d}{d!} p^d \le C N^{\frac{\alpha}{2-\alpha}}.
		\end{equation}	
		For $\alpha < \frac{2d}{d+1}$, we have $\frac{\alpha}{2-\alpha} < d$, and hence condition \eqref{corollary-a} fails to hold for such $\alpha$. Consequently, by Theorem~\ref{theorem-a}, the corresponding subsystem is not $\alpha$-Sidon.
	\end{proof}
	
\section{Polynomial Chaos-Type Subsystems of Unbounded Vilenkin Systems} \label{section-5}
It readily follows from Theorems \ref{THEOREM-1} and \ref{THEOREM-2} that the polynomial chaos-type subsystems $C^d_R$ of bounded Vilenkin systems, constructed via \eqref{Cphid}, are $q$-lacunary and $\frac{2d}{d+1}$-Sidon. In this section, we consider the case of unbounded Vilenkin systems.

	\begin{theorem} \label{theorem-not-a-sid}
		A subsystem of the type of polynomial chaos $C^d_R$, constructed according to the formula \eqref{Cphid}, an unbounded Vilenkin system is not a $\alpha$-Sidon subsystem for any $1\le\alpha<2$ and for all $d\ge 1$.
	\end{theorem}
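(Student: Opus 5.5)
The natural route is Theorem~\ref{theorem-a}: we find subsets $A'\subset A$ (where $A$ indexes $C^d_R$ inside the Vilenkin system) for which $|A'\cap B_N|$ is large while $\mu(F^N_{A'})$ is not too small. Since the system is unbounded, we may pick indices $k$ with $p_k$ arbitrarily large. The key observation is that for a single large base $p_k$, the powers $R_k^{j}$, $1\le j<p_k$, already form a family of $p_k-1$ characters whose common fixed set is tiny in measure relative to the group they live on, yet only of ``logarithmic'' size in the sense of \eqref{th-a-1}. So we use the freedom $1\le j_i<m_{k_i}=p_{k_i}$ in \eqref{Cphid} to produce many functions supported on few coordinates.

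Concretely, fix $d\ge1$. Choose indices $k_1<\dots<k_d$ with $p_{k_i}\ge M$ for a parameter $M\to\infty$; this is possible since $\sup_k p_k=\infty$ (choose them along a subsequence with $p_{k}\to\infty$). Let $A'$ be the set of Vilenkin indices of the functions $R_{k_1}^{j_1}\cdots R_{k_d}^{j_d}$ with $1\le j_i<p_{k_i}$, and take $N=k_d+1$, so that $A'\subset B_N$. Then
\[
|A'\cap B_N|=\prod_{i=1}^d (p_{k_i}-1)\ge (M-1)^d .
\]
Next we compute $F^N_{A'}$: $x$ lies in it iff $\omega_{p_{k_1}}^{j_1x_{k_1}}\cdots\omega_{p_{k_d}}^{j_dx_{k_d}}=1$ for all admissible $(j_i)$. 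Taking all $j_i=1$ except varying one $j_{i_0}$ between $1$ and $2$ (if $p_{k_{i_0}}\ge3$), we get $\omega_{p_{k_{i_0}}}^{x_{k_{i_0}}}=1$, i.e.\ $x_{k_{i_0}}=0$. Hence $F^N_{A'}=\{x: x_{k_1}=\dots=x_{k_d}=0\}$ and
\[
\mu(F^N_{A'})=\prod_{i=1}^d p_{k_i}^{-1}.
\]
Substituting into \eqref{th-a-1} gives
\[
\prod_{i=1}^d(p_{k_i}-1)\le C\Bigl(\sum_{i=1}^d\log p_{k_i}\Bigr)^{\frac{\alpha}{2-\alpha}} .
\]
The left side grows polynomially in the $p_{k_i}$ and the right side only polylogarithmically; for instance with all $p_{k_i}\approx M$ this reads $M^d\lesssim (d\log M)^{\alpha/(2-\alpha)}$, which fails as $M\to\infty$ for every fixed $\alpha<2$. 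This contradicts Theorem~\ref{theorem-a}, so $C^d_R$ is not $\alpha$-Sidon. The case $d=1$ already works via a single large $p_k$.

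The main obstacle is checking that Theorem~\ref{theorem-a} applies as stated. First, its constant $C$ must depend only on $\alpha$ and the Sidon constant, not on $N$ or the bases; this is true provided the Marcus--Pisier bound \eqref{th-a-4} holds with a universal constant on arbitrary compact abelian groups (here the Vilenkin group with arbitrary $p_k$), and I would verify that explicitly. Second, the measure computation of $F^N_{A'}$ must be done carefully on the group (not on $[0,1)$). Finally, one should note that no $\ell_2$ normalization issue arises, since the argument only uses unimodular coefficients.
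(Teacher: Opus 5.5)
Your proof is correct, and it uses the same key tool as the paper: Theorem~\ref{theorem-a} applied to the family of all nontrivial powers of generalized Rademacher functions with large base, whose common fixed set has measure only $1/p_k$ per coordinate. The difference is in how $d\ge 2$ is handled.

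The paper first proves the case $d=1$ with $A_k=\{R_k,\dots,R_k^{p_k-1}\}$, obtaining $p_k\le C(\log p_k)^{\alpha/(2-\alpha)}$. It then reduces general $d$ to $d=1$ by multiplying by a fixed character, which preserves both the sup-norm and the coefficient norms. As written, the paper multiplies by $R_0\cdots R_{d-1}$, which actually lands in $C^{d+1}_R$; the correct multiplier is $R_0\cdots R_{d-2}$.

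You instead apply Theorem~\ref{theorem-a} directly to the $d$-fold product family. This needs the computation $F^N_{A'}=\{x_{k_1}=\dots=x_{k_d}=0\}$, which is easy and which you do correctly, but it requires no reduction step and so sidesteps that indexing slip. The paper's route has the advantage that the same reduction is reused verbatim in Theorem~\ref{theorem-not-q-lak}.

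Two small remarks:
\begin{itemize}
\item You cannot arrange $p_{k_i}\approx M$, only $p_{k_i}\ge M$. This is harmless: with $P=\prod_i p_{k_i}$ your inequality reads $2^{-d}P\le C(\log P)^{\alpha/(2-\alpha)}$, which fails as $P\to\infty$.
\item The uniformity of the constant in Theorem~\ref{theorem-a} that you flag is exactly what the paper also uses, since its constant is taken independent of $k$. It comes from the universal constant in the Marcus--Pisier bound on compact abelian groups.
\end{itemize}
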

	\begin{proof}
		It suffices to show that the subsystem $C_R^1$ of unbounded Vilenkin systems is not a $\alpha$-Sidon system for any $1\le\alpha<2$, then the subsystems $C_R^d$ for all $d\ge1$ will not be systems of $\alpha$-Sidon, since $C_R^{1,d} \subset C_R^d$, where 
		\[
		C_R^{1,d} := \{VC_n \cdot R_0\cdot \ldots \cdot R_{d-1}, \quad VC_n \in \widetilde{C}_R^1 := C_R^1 \setminus \{R_0,\ldots, R_0^{p_0-1}, \ldots, R_{d-1},\ldots, R_{d-1}^{p_{d-1}-1}\}\}.
		\]  
Indeed, if $C_R^{1,d}$ were an $\alpha$-Sidon system for some $1 \le \alpha < 2$, then $\widetilde{C}_R^1$ would also be an $\alpha$-Sidon system for the same $\alpha$, since multiplication by the function $R_0 \cdot \ldots \cdot R_{d-1}$ preserves the $L_q$-norms of polynomials and the $\ell_a$-norms of their coefficients. 
		
Let's prove that the subsystem $C_R^1$ is not a $\alpha$-Sidon system for any $\alpha<2$. Consider the following subsystem for each $k$:
		\begin{equation} \label{theorem-not-a-sid-1}
			A_k := \{R_k, \ldots, R_k^{p_k-1}\}.
		\end{equation}
		Then 
		\begin{equation} \label{theorem-not-a-sid-2}
			F_{A_k}:= \{x\in [0,1)\colon R_k(x)=1\} \; \text{and} \; \mu(F_{A_k}) = \frac{1}{p_k}.
		\end{equation}
By Theorem~\ref{theorem-a}, if $C^1_R$ is an $\alpha$-Sidon system, then there exists a constant $C$ independent of $k$ such that:
		\begin{equation} \label{theorem-not-a-sid-3}
		|A_k|\le C \cdot \left(\log\left(\frac{1}{\mu(F_{A_k})}\right)\right)^{\frac{\alpha}{2-\alpha}}.
	    \end{equation}
		Substituting \eqref{theorem-not-a-sid-1} and \eqref{theorem-not-a-sid-2} into \eqref{theorem-not-a-sid-3}, we obtain
		\begin{equation} \label{theorem-not-a-sid-4}
			p_k\le C \cdot \left(\log p_k\right)^{\frac{\alpha}{2-\alpha}}.
		\end{equation}
		Inequality \eqref{theorem-not-a-sid-4} cannot hold for sequences $(p_k)$ with $\sup_k p_k = \infty$, since $x / (\log x)^{\frac{\alpha}{2-\alpha}} \to \infty$ as $x \to \infty$ for $1 \le \alpha < 2$.Thus, we get a contradiction and the subsystem $C^1_R$, and hence $C^d_R$($d\ge 1$), of the unbounded Vilenkin system is not a $\alpha$-Sidon system for any $\alpha<2$.
	\end{proof}
	
	\begin{theorem} \label{theorem-not-q-lak}
	A subsystem of the type of polynomial chaos $C^d_R$, constructed according to the formula \eqref{Cphid}, an unbounded Vilenkin system is not a subsystem of the $q$-lacunarity system for all $q>2$ and all $d\ge 1$.
	\end{theorem}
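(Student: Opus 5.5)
The plan is first to reduce to $d=1$, exactly as in the proof of Theorem~\ref{theorem-not-a-sid}, and then to show that for $d=1$ the Khintchine constant blows up along a single block $A_k=\{R_k,\ldots,R_k^{p_k-1}\}$.

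For the reduction, set $\widetilde{C}_R^1 := C_R^1 \setminus \{R_0,\ldots,R_{d-1}^{p_{d-1}-1}\}$. Then $C_R^{1,d}=\{VC_n\cdot R_0\cdots R_{d-1}\colon VC_n\in \widetilde{C}_R^1\}$ is contained in $C_R^d$. Multiplication by the unimodular function $R_0\cdots R_{d-1}$ preserves all $L_q$-norms of polynomials and all coefficient norms. Hence if $C_R^d$ were $q$-lacunary, so would be $\widetilde{C}_R^1$. Because $\sup_k p_k=\infty$, we still have $\sup_{k\ge d}p_k=\infty$, so the blocks $A_k$ with $k\ge d$ lie in $\widetilde{C}_R^1$ and have unbounded size.

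Fix $q>2$ and a block $A_k$ with $p=p_k$. Take the polynomial
\[
D_k(x)=\sum_{j=1}^{p-1}R_k^j(x)=\sum_{j=1}^{p-1}\omega_p^{jx_k}.
\]
Its coefficient vector has $\ell_2$-norm $\sqrt{p-1}$. On the other hand, $D_k(x)=p-1$ when $x_k=0$ and $D_k(x)=-1$ otherwise. The set $\{x_k=0\}$ has measure $1/p$, so
\[
\|D_k\|_q\ge \bigl((p-1)^q p^{-1}\bigr)^{1/q}\ge \tfrac12\, p^{1-1/q}.
\]
If the Khintchine inequality \eqref{28042039} held with a constant $\kappa(q)$ independent of $k$, we would get $\tfrac12 p_k^{1-1/q}\le \kappa\sqrt{p_k}$, that is, $p_k^{\frac12-\frac1q}\le 2\kappa$. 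Since $q>2$, the exponent $\frac12-\frac1q$ is positive, so letting $p_k\to\infty$ gives a contradiction. Hence the system fails to be $q$-lacunary for every $q>2$.

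For $d\ge2$ one could avoid the reduction and test directly with $D_k\cdot R_0\cdots R_{d-1}$. The product $R_0\cdots R_{d-1}$ is unimodular, so the same computation applies verbatim.

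I do not expect a real obstacle. The only point requiring care is checking that the shifted polynomial $D_k\cdot R_0\cdots R_{d-1}$ has distinct terms lying in $C_R^d$ for $k\ge d$. This holds because each term $R_0\cdots R_{d-1}R_k^j$ has pairwise distinct indices and exponents within the allowed ranges.
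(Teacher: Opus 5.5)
Your proof is correct. After the common reduction to $d=1$, it follows a different route from the paper.

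The paper chooses a subsequence with $p_{k_s}\ge 2^s$ and forms the uniformly convergent series $g=\sum_{s\ge r}2^{-s}\chi_{F_s}$, where $\chi_{F_s}=p_{k_s}^{-1}\bigl(1+R_{k_s}+\dots+R_{k_s}^{p_{k_s}-1}\bigr)$. It notes that $g$ vanishes outside $\bigcup_{s\ge r}F_s$, a set of measure less than $\varepsilon$. It then concludes via Gaposhkin's theorem: a system without the $\varepsilon$-uniqueness property cannot be $q$-lacunary for $q>2$.

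You use a single block instead. Your $D_k=p_k\chi_{F_k}-1$ is exactly the paper's building block with the constant term removed. Comparing $\|D_k\|_q\ge\frac12p_k^{1-1/q}$ with the coefficient norm $\sqrt{p_k-1}$ shows directly that any Khintchine constant must be at least $\frac12 p_k^{1/2-1/q}$.

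Your route is elementary and self-contained, and it gives an explicit rate of blow-up. It also never needs the constant function. That function is not in $C^1_R$ but appears in the paper's series $g$, so the paper tacitly uses the fact that adjoining one function does not affect $q$-lacunarity. The paper's route costs an appeal to Gaposhkin's theorem, but it exhibits a structural obstruction of independent interest: nontrivial series in the system that vanish on sets of measure arbitrarily close to $1$.

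There is one indexing slip, which you share with the paper. The product $R_0\cdots R_{d-1}R_k^j$ has $d+1$ factors with distinct indices, so it lies in $C^{d+1}_R$, not in $C^d_R$. For $d\ge 2$ you should multiply by $R_0\cdots R_{d-2}$ and take $k\ge d-1$. Equivalently, your argument as written treats $C^{d+1}_R$, and together with the case $d=1$ it still covers every $d\ge 1$. Nothing else in the argument changes.
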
	
	\begin{proof} Following the same argument as at the beginning of the proof of Theorem~\ref{theorem-not-a-sid}, it suffices to prove the statement only for subsystems of the form $C^1_R$ of unbounded Vilenkin systems. Since $\sup_k p_k = \infty$, there exists a sequence $\{p_{k_s}\}_{s \in \mathbb{N}}$ such that $p_{k_s} \ge 2^s$.
		
Consequently, the series $\sum_{s=1}^{\infty} \frac{1}{p_{k_s}}$ converges. Given any $\varepsilon > 0$, we can choose an index $r$ such that
\begin{equation}
	\label{Eq:Ex-2}
	\sum_{s=r}^{\infty} \frac{1}{p_{k_s}} < \varepsilon.
\end{equation}
Consider the function $g \colon [0,1) \to \mathbb{C}$, 
		\begin{equation}
			\label{Eq:Ex-1} 
			g (x) 
			:= 
			\sum_{s=r}^{\infty} a_s, \qquad 
			a_s := 2^{-s} 
			\frac{ 1 + R_{k_s} (x) + \ldots + R_{k_s}^{p_{k_s}-1} (x)}{p_{k_s}}. 
		\end{equation}
		and we denote:
		\[
		F_s := \{x\in [0,1)\colon R_{k_s}(x) =1\}.
		\]
	Note that $a_s(x) = 2^{-s} \chi_{F_s}(x)$, where $\chi_{F_s}$ is the indicator of the set of $F_s$. 
	Since $|a_s(x)|\le 2^{-s}$ and the series $\sum_{s=r}^{\infty}2^{-s}$ converge,
	the series \eqref{Eq:Ex-1} converges absolutely and uniformly by $[0,1)$.
Therefore, $g(x)$ is correctly defined as the sum of this series.  
In this case, $g(x) = 0$ if $x \in G$, 
		\[ 
		G := 
		[ 0, 1 ) \setminus 
		\Big( 
		\bigcup_{s=r}^{\infty} 
		F_s
		\Big), \qquad \mu (G)  
		\ge 1 - \sum_{s=r}^{\infty} \frac{1}{p_{k_s}} > 1 - \varepsilon,
		\] 
		where the estimate for $\mu(G)$ follows from \eqref{Eq:Ex-2}.
		
		Furthermore, by Gaposhkin's result (see \cite[Ch.~1, \S~2]{Gaposhkin}), if an orthonormal system is such that for every $\varepsilon > 0$ there exists a set $G \subset [0,1)$ with $\mu(G) > 1 - \varepsilon$ and a non-trivial series with respect to this system converging to zero on $G$ (such a system is said to lack the $\varepsilon$-uniqueness property), then this system is not a $q$-lacunarity system for any $q>2$. Since for each $\varepsilon>0$ we have constructed a set and a corresponding series satisfying the conditions of Gaposhkin's theorem, the subsystem $C^1_R$, and hence $C^d_R$($d\ge 1$), of the unbounded Vilenkin system is not a system of $q$-lacunarity.
		This completes the proof. 
	\end{proof}

	\section{Applications} \label{section-4}
	In this section, we make a brief remark concerning the discretization of integral norms in our setting, combining the results of this chapter with known results on discretization. First, we note that the term $q$-lacunary is applied not only to systems (see \eqref{28042039}) but also to subspaces (see \cite[Ch. 9, \S 4]{Kashin-Saakyan-1999}). Observe that if $\{\varphi_n\}$ is a $q$-lacunary system and $L$ is a subspace of $L^2(0, 1)$ consisting of functions of the form
	
	\[
	f(x) \stackrel{L^2}{=} \sum_{n=1}^{\infty} a_n \varphi_n(x), \quad \sum_{n=1}^{\infty} a_n^2 < \infty,
	\]
	
	then it follows directly from definition \eqref{28042039} that any orthonormal basis in $L$ is also a $q$-lacunary system. Therefore, it is often more natural to speak of $q$-lacunary subspaces in $L^2(0, 1)$, i.e., subspaces $L$ such that
	
	\[
	\|f\|_2 \leqslant \|f\|_q \leqslant K \|f\|_2, \quad f \in L \quad (2 < q < \infty).
	\]
	
	By the discretization of the $L_q$-norm on a finite-dimensional space $X_N$, we mean the existence of a set of weights $\{\lambda_i\}_{i=1}^{m}$ and a set of points $\{\xi_i\}_{i=1}^{m}$ such that for every $f \in X_N$,
	
	\[
	C_1\|f\|_q \le \left(\sum_{i=1}^{m}\lambda_i|f(\xi_i)|^{q} \right)^{1/q} \le C_2\|f\|_q.
	\]
	
	It is known that when the $L_q$-norm with $q \in (2,\infty)$ in $N$-dimensional space
	is equivalent to the $L_2$-norm, then for discretization with positive weights of the $L_q$-norm, it is necessary to have at least $O(N^{q / 2})$ points (see, for example, \cite[(D.20)]{Kashin-2022}).
	
	Then, if $B_N \subset B_{\gamma}^d$ is a subsystem of $N$ functions, and $L_{B_N} \subset L_2(0,1)$ is the corresponding $N$-dimensional subspace, then for the discretization of the $L_q$-norm with positive weights of functions from $L_{B_N}$, according to the result stated above and Theorem \ref{THEOREM-1}, $O(N^{q / 2})$ points are required.

\end{document}